\tikzset{ext/.style={circle, draw,inner sep=1pt},int/.style={circle,draw,fill,inner sep=1pt},nil/.style={inner sep=1pt}}
\tikzset{exte/.style={circle, draw,inner sep=3pt},inte/.style={circle,draw,fill,inner sep=3pt}}
\tikzset{diagram/.style={matrix of math nodes, row sep=3em, column sep=2.5em, text height=1.5ex, text depth=0.25ex}}
\tikzset{diagram2/.style={matrix of math nodes, row sep=0.5em, column sep=0.5em, text height=1.5ex, text depth=0.25ex}}
\theoremstyle{plain}
  \newtheorem{thm}{Theorem}
  \newtheorem{defi}{Definition}
  \newtheorem{prop}{Proposition}
  \newtheorem{cor}[prop]{Corollary}
  \newtheorem{lemma}{Lemma}
\theoremstyle{definition}
  \newtheorem{ex}{Example}
  \newtheorem{rem}{Remark}
\newcommand{\Hom}{\mathrm{Hom}}
\newcommand{\R}{{\mathbb{R}}}
\newcommand{\Q}{{\mathbb{Q}}}
\newcommand{\Z}{{\mathbb{Z}}}
\newcommand{\K}{{\mathbb{K}}}
\newcommand{\HGC}{{\mathrm{HGC}}}
\newcommand{\fHGC}{{\mathrm{fHGC}}}
\newcommand{\Graphs}{{\mathsf{Graphs}}}
\newcommand{\Gr}{{\mathsf{Gr}}}
\newcommand{\Def}{\mathrm{Def}}
\newcommand{\Poiss}{\mathsf{Pois}}
\newcommand{\Pois}{\Poiss}
\newcommand{\hoPois}{\mathsf{hoPois}}
\newcommand{\op}{\mathcal}
\newcommand{\Lie}{\mathsf{Lie}}
\newcommand{\hoLie}{\mathsf{hoLie}}
\newcommand{\coker}{\mathrm{coker}}
\newcommand{\Ass}{\mathsf{Assoc}}
\newcommand{\Com}{\mathsf{Com}}
\newcommand{\bpm}{\begin{pmatrix}}
\newcommand{\epm}{\end{pmatrix}}
\newcommand{\GC}{\mathrm{GC}}
\newcommand{\hoAss}{\mathsf{hoAss}}
\newcommand{\hoCom}{\mathsf{hoCom}}
\newcommand{\GL}{\mathrm{GL}}
\newcommand{\Out}{\mathrm{Out}}
\newcommand{\Aut}{\mathrm{Aut}}
\newcommand{\Fin}{\mathrm{Fin}}
\newcommand{\OS}{\mathrm{OS}} 
\newcommand{\OX}{\mathsf{O}}
\newcommand{\Grx}{\mathsf{Grx}}
\newcommand{\HP}{\mathsf{HH}}
\newcommand{\HPC}{\mathsf{CH}}
\newcommand{\ob}{\mathrm{ob}}
\newcommand{\Vect}{\mathsf{Vect}}
\newcommand{\G}{\mathsf{G}}
\newcommand{\Or}{\mathsf{Or}}
\newcommand{\colim}{\mathsf{colim}}
\newcommand{\linegraph}{
\begin{tikzpicture}[baseline=-.65ex]
\draw (0,0) -- (1,0);
\end{tikzpicture}
}
\newcommand{\tripodgraph}{
\begin{tikzpicture}[scale=.6,baseline=-.65ex]
\node[int] (v) at (0,0){};
\draw (v) -- +(90:1) (v) -- ++(210:1) (v) -- ++(-30:1);
\end{tikzpicture}
}
\newcommand{\Det}{\mathrm{Det}}
\newcommand{\beq}[1]{\begin{equation}\label{#1}}
\newcommand{\eeq}{\end{equation}}
\begin{document}
\title{Commutative hairy graphs and representations of $\Out(F_r)$}

\author{Victor Turchin}
\address{Department of Mathematics\\
  Kansas State University\\
  138 Cardwell Hall\\
  Manhatan, KS 66506, USA}
  \email{turchin@ksu.edu}
\author{Thomas Willwacher}
\address{Institute of Mathematics \\ University of Zurich \\  
Winterthurerstrasse 190 \\
8057 Zurich, Switzerland}
\email{thomas.willwacher@math.uzh.ch}

\thanks{V.T. acknowledges partial support by the MPIM, Bonn,  and the IHES. T.W. acknowledges partial support by the Swiss National Science Foundation (grant 200021\_150012) and the SwissMap NCCR, funded by the Swiss National Science Foundation}

\subjclass[2000]{57R40; 18D50; 13D03; 19D55}
\keywords{Graph-complexes, spaces of long embeddings, little discs operads, groups of outer automorphisms
of free groups}

\begin{abstract}
We express the hairy graph complexes computing the rational homotopy groups of long embeddings (modulo immersion) of $\R^m$ in $\R^n$ as ``decorated'' graph complexes associated to certain representations of the outer automorphism groups of free groups.
This interpretation gives rise to a natural spectral sequence, which allows us to shed some light on the structure of the hairy graph cohomology. We also explain briefly 
the connection to the deformation theory of the little discs operads and some conclusions that this brings.

\end{abstract}

\maketitle

\section{Introduction}
In this paper we consider complexes of linear combinations of isomorphism classes of graphs with external legs (or "hairs"), such as the following.
\begin{equation}\label{equ:HGCsample}
 \begin{tikzpicture}[scale=.5]
 \draw (0,0) circle (1);
 \draw (-180:1) node[int]{} -- +(-1.2,0);
 \end{tikzpicture}
,\quad
\begin{tikzpicture}[scale=.6]
\node[int] (v) at (0,0){};
\draw (v) -- +(90:1) (v) -- ++(210:1) (v) -- ++(-30:1);
\end{tikzpicture}
\,,\quad
\begin{tikzpicture}[scale=.5]
\node[int] (v1) at (-1,0){};\node[int] (v2) at (0,1){};\node[int] (v3) at (1,0){};\node[int] (v4) at (0,-1){};
\draw (v1)  edge (v2) edge (v4) -- +(-1.3,0) (v2) edge (v4) (v3) edge (v2) edge (v4) -- +(1.3,0);
\end{tikzpicture}
 \, ,\quad
 \begin{tikzpicture}[scale=.6]
\node[int] (v1) at (0,0){};\node[int] (v2) at (180:1){};\node[int] (v3) at (60:1){};\node[int] (v4) at (-60:1){};
\draw (v1) edge (v2) edge (v3) edge (v4) (v2)edge (v3) edge (v4)  -- +(180:1.3) (v3)edge (v4);
\end{tikzpicture}
 \, 
 \end{equation}
 The differential on these complexes is defined by summing over all ways of expanding a (non-hair-)vertex.
  More precisely, due to choices in signs and degrees, the hairy graph complexes come in several variants (which we denote $\HGC_{m,n}$), depending on a pair of integers $m,n$. For more details, see section \ref{sec:hairydef_std} below.  
   The hairy graph homology $H(\HGC_{m,n})$ is an object of significant interest in algebraic topology, since it computes the rational homotopy groups of spaces of long embeddings $\R^m\to \R^n$ modulo immersions in codimensions $n-m\geq 3$ as has been shown in \cite{FTW}.\footnote{For a weaker range of dimensions
this result has been shown earlier in~\cite{Turchin1,Turchin3}.}
However, our current knowledge of the hairy graph homology is rather limited.
Let us just note that since the differential cannot alter the number of loops or hairs of graphs, the complexes $\HGC_{m,n}$ split into finite dimensional subcomplexes $\HGC_{m,n}^{r, h}$ of fixed loop number $r$, and fixed number of hairs $h$.
Furthermore, up to unimportant degree shifts, the complexes $\HGC_{m,n}$ depend on $m$ and $n$ only through their parity, so that there are only four essentially different cases to consider.
Finally, the complexes $\HGC_{m,n}$ carry a natural Lie bracket.

The purpose of this paper is twofold. First, we shed some light on the structure of the hairy graph homology. Secondly, we show that the complexes $\HGC_{m,n}$ may be replaced by somewhat simpler quasi-isomorphic complexes.

To this end let us recall a more geometric approach to defining many types of graph complexes.
Culler and Vogtmann~\cite{CV} defined the so called {\it outer space} $\OS_r$ whose points are isomorphism classes of metrized $r$-loop graphs, i.e., graphs with a non-negative length assigned to each edge, such that the combined length of any closed loop is positive.\footnote{We note that our notation $\OS_r$ is slightly inconsistent with that Culler and Vogtmann, in that their outer space is contractible and comes with an action of $\Out(F_r)$, while our $\OS_r$ is the quotient under this action. Unfortunately, Culler and Vogtmann did not coin a catchy name for this quotient.}  A metrized graph is identified with the metrized graph obtained by contracting all edges of zero lengths.

The space $\OS_r$ plays a role similar to the classifying space of the group of outer automorphisms of the free group on $r$ generators $F_r$. In particular, a representation $V$ of $\Out(F_r)$ determines a local system on $\OS_r$.
For any such local system one may write down a "decorated" graph complex $\GC^r_V$ computing the compactly supported cohomology on $\OS_r$ with values in the local system.

Now we are ready to state our main results. We begin with the case of even codimension $n-m$. Let $\K$ be a field of characteristic zero. 
Abusing notation we also denote by $\K$ the trivial one-dimensional representation of $\Out(F_r)$.
We denote by $H_1$ the $r$-representation of $\Out(F_r)$ obtained by pulling back the canonical representation of $\GL(r,\Z)$ on $\K^r$ under the map $\Out(F_r)\to \GL(n,\Z)$. 
Finally, we define the one-dimensonal representation $\Det=\wedge^r H_1$.

\begin{thm}\label{thm:main_even}
Let  $n-m$ be even. The complex $\HGC_{m,n}^{r,h}$, $r\geq 2$, $h\geq 1$, admits a splitting into 
a direct sum of two complexes $\HGC_{m,n}^{r,h,I}\oplus\HGC_{m,n}^{r,h,II}$ such that in the homology
one gets the following splitting:
\begin{itemize}
\item if $n$ is even, then 
\beq{equ:thm1_1}
H(\HGC_{m,n}^{r,h}) \cong 
H(\GC^r_{S^h H_1})[nr+(h-1)(n-m-2)-2] \oplus H(\GC^r_{S^{h-1}H_1})[nr+(h-1)(n-m-2)-1] 
\eeq
where $S^hH_1$ is the $h$-fold symmetric power of the representation $H_1\cong \K^r$;
\item if $n$ is odd, then 
\beq{equ:thm1_2}
H(\HGC_{m,n}^{r,h}) \cong 
H(\GC^r_{\Det\otimes S^hH_1})[nr+(h-1)(n-m-2)-2] \oplus H(\GC^r_{\Det\otimes S^{h-1}H_1})[nr+(h-1)(n-m-2)-1] 
.
\eeq
\end{itemize}
 In both cases the Lie bracket with the graph 
\beq{equ:linegraph}
 L=\begin{tikzpicture}[baseline=-.65ex] \draw (0,0) -- (1,0); \end{tikzpicture}
\eeq
sends the subcomplex $\HGC_{m,n}^{r,h,I}$ isomorphically to $\HGC_{m,n}^{r,h+1,II}$  (in the homology it maps  the repeated  summands $H(\GC^r_{S^hH_1})$, or respectively, $H(\GC^r_{\Det \otimes S^hH_1})$, $h\geq 1$, identically
one  onto another)\footnote{Note that the Lie bracket with $L$ raises the number of hairs $h$ by one.} and sends $\HGC_{m,n}^{r,h,II}$ to zero.
\end{thm}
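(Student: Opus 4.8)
My plan is to organize the whole complex around the hair-adding operator and reduce the statement to two facts: that bracketing with $L$ is \emph{acyclic} at the chain level, and that the cokernel of this operator computes the compactly supported cohomology of $\OS_r$ with the advertised coefficients. Throughout I would regard $\HGC_{m,n}^{r}:=\bigoplus_{h\ge 1}\HGC_{m,n}^{r,h}$ as carrying two commuting structures: the internal vertex-expanding differential $d$, and the operator $B:=[L,-]$ which raises $h$ by one. Since $L$ is a $d$-cocycle and, in the even case, $[L,L]=0$, the graded Jacobi identity gives $B^2=0$ and $dB=\pm Bd$. Thus $(\HGC_{m,n}^{r,\bullet},B)$ is a complex of $d$-complexes, and $\operatorname{im}B$, $\ker B$ are $d$-subcomplexes.

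The key step is to prove that $(\HGC_{m,n}^{r,\bullet},B)$ is exact, i.e. $\ker B=\operatorname{im}B$ on the nose for $h\ge 1$ (with $\HGC_{m,n}^{r,0}$ as the bottom term). Combinatorially $B$ glues $L$ onto each hair, so up to a bivalent-vertex identification it attaches a new, homologically trivial hair. I would construct an explicit contracting homotopy $s$ that "detaches a trivial hair," summed over the hairs with the appropriate symmetrization and signs, and verify $Bs+sB=\mathrm{id}$. Working over a field of characteristic zero and arranging $s$ to commute with $d$, this acyclicity splits the complex $d$-equivariantly: set $\HGC_{m,n}^{r,h,II}:=\operatorname{im}(B\colon \HGC_{m,n}^{r,h-1}\to\HGC_{m,n}^{r,h})=\ker(B\colon\HGC_{m,n}^{r,h}\to \HGC_{m,n}^{r,h+1})$ and let $\HGC_{m,n}^{r,h,I}$ be the $d$-invariant complement provided by $s$. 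Then $\HGC_{m,n}^{r,h}=\HGC_{m,n}^{r,h,I}\oplus \HGC_{m,n}^{r,h,II}$, the operator $B$ annihilates $\HGC_{m,n}^{r,h,II}=\ker B$, and it maps $\HGC_{m,n}^{r,h,I}$ isomorphically onto $\HGC_{m,n}^{r,h+1,II}=\operatorname{im}B$. This is precisely the last assertion of the theorem at the chain level, hence also in homology.

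It then remains to compute one of the two summands, say $H(\HGC_{m,n}^{r,h,I})\cong H(\coker B)$ at hair number $h$; the other follows formally, since the isomorphism $B\colon \HGC_{m,n}^{r,h,I}\xrightarrow{\cong}\HGC_{m,n}^{r,h+1,II}$ identifies $H(\HGC_{m,n}^{r,h+1,II})$ with $H(\HGC_{m,n}^{r,h,I})$ up to the internal degree of $L$, producing the repeated summand and the passage $S^{h}H_1\rightsquigarrow S^{h-1}H_1$. In $\coker B$ every hair created by an $L$ has been divided out, so each surviving hair records a genuine first-homology class of the underlying loop-order-$r$ graph $G$; taking the symmetric (commutative, even-case) product of the $h$ hairs identifies the decoration with the local system $\mathcal{S}^{h}\mathcal{H}_1$ whose fibre over the cell $G$ of $\OS_r$ is $S^hH_1$, with $H_1\cong\K^r$ the defining $\Out(F_r)$-representation. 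Matching $d$ with the cellular differential of $\OS_r$ yields $H(\HGC_{m,n}^{r,h,I})\cong H(\GC^r_{S^hH_1})$ up to the stated shift. For $n$ odd the orientation data on the half-edges twists the decoration by the sign character $\Det=\wedge^rH_1$, accounting for the $\Det\otimes$ factor; the shifts $nr+(h-1)(n-m-2)-2$ (resp. $-1$) are read off by summing the intrinsic degrees of the $r$ loops, the $h$ hairs, and the residual edges and vertices.

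The main obstacle is the acyclicity of $B=[L,-]$ together with the $d$-equivariance of the homotopy $s$: "removing a trivial hair" is a priori ill-defined, since a hair need not have been produced by an $L$, and the bivalent-vertex identifications, signs, and symmetrizations must be controlled so that $Bs+sB=\mathrm{id}$ holds \emph{exactly} and $s$ commutes with $d$. I expect this to need a filtration (for instance by the number of hairs meeting a common vertex, or by loop order) and an induction rather than one closed-form homotopy. A secondary difficulty is upgrading the decoration comparison of the third step to an actual quasi-isomorphism onto $\GC^r_{S^hH_1}$ — i.e. showing that "a hair is an $H_1$-decoration" compatibly with $d$, not merely on generators — which is cleanest through the spectral sequence mentioned in the introduction, whose $E_1$-page is $H_c(\OS_r;\mathcal{S}^{\bullet}\mathcal{H}_1)$; tracking the signs and the $\Det$-twist in the odd-$n$ case is the fiddly part there.
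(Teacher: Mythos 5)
Your architecture is, after dualization, essentially the paper's: the paper rewrites $\HGC_{m,n}^{r,h}$ as a decorated complex $\GC^r_{\Det^{\otimes n}\otimes S^h(C_\bullet[-n+m+1])}$ for the coefficient system $C_\bullet$ of cellular chains of the core graph, identifies $\tfrac12[L,-]$ with the derivation $D_\pi$ extending the augmentation $\pi\colon C_\bullet\to\K$ to symmetric powers, and your exactness of $B=[L,-]$ is exactly the statement that the short exact sequence $0\to\tilde C_\bullet\to C_\bullet\xrightarrow{\pi}\K\to 0$ of coefficient systems splits \emph{naturally} with respect to edge contractions. But that is precisely where your proposal has a genuine gap: everything hangs on the $d$-equivariant homotopy $s$ with $Bs+sB=\mathrm{id}$ (equivalently, a natural section of $\pi$), and you leave it unconstructed, guessing that a filtration-and-induction is required. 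That fallback would not suffice: a homotopy built inductively along a filtration controls only the associated graded, so it would at best give degeneration of a spectral sequence, not the honest chain-level direct sum $\HGC_{m,n}^{r,h}=\HGC_{m,n}^{r,h,I}\oplus\HGC_{m,n}^{r,h,II}$ that the theorem asserts and that your own construction of $I$ as a $d$-invariant complement needs. Moreover the naive candidate --- detach a hair with uniform weight over the vertices --- does \emph{not} commute with $d$, because vertex expansion changes the vertex set; this is exactly the obstruction you sensed.

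The missing idea is a one-line formula, not an induction. The section $\K\cong H_0(\Gamma)\to C_0(\Gamma)$ sends $1$ to
\[
\frac{1}{2r-2}\sum_{v\in V\Gamma}\bigl(\mathit{val}(v)-2\bigr)\,v ,
\]
which is natural because contracting an edge joining $v,v'$ to a vertex $w$ satisfies $(\mathit{val}(w)-2)=(\mathit{val}(v)-2)+(\mathit{val}(v')-2)$, and is a section because $\sum_{v}(\mathit{val}(v)-2)=2|E\Gamma|-2|V\Gamma|=2r-2$ (this is also where $r\geq 2$ enters). Dually, your homotopy $s$ removes a hair attached at a core vertex $v$ with weight $(\mathit{val}(v)-2)/(2r-2)$; then $Bs+sB=\mathrm{id}$ and $ds=sd$ hold on the nose. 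The same section supplies the natural quasi-isomorphism $H_\bullet\Rightarrow C_\bullet$ that your final step silently uses to replace the two-term complex of hair decorations by $S^hH_1$ --- without it, ``a surviving hair records an $H_1$-class'' is again only a statement about associated gradeds. With this formula inserted, your argument closes up and coincides with the paper's.
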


If the number $n-m$ is odd, the story is more complicated in that we need to consider representations of $\Out(F_r)$ that do not factor through $\GL(r,\Z)$. The representations we need are described in \cite{TW2}, from which we recall the following. 
Let $A$ be a commutative algebra.
Then the group $\Out(F_r)$ acts on the Hochschild-Pirashvili homology of $A$ on a wedge of $r$ circles $W_r$.
If $A$ is graded, then the $\Out(F_r)$ action restricts to each graded component of the Hochschild-Pirashvili homology.
We need the case of $A$ being the 2-dimensional graded algebra (the dual numbers)
\[
A= \K[x]/x^2 = \K 1 \oplus \K x 
\] 
with $x$ in degree $0$.
This algebra carries an auxiliary grading by assigning $x$ degree $1$.
We let $B_r^h$ be the respresentation of $\Out(F_r)$ on the piece of auxiliary degree $h$ and homological degree $h$ of the Hochschild-Pirashvili homology of $A$ on a wedge on $r$ circles.
These representations may be explicitly computed and have small dimensions, see \cite{TW2} for a more detailed discussion.\footnote{In fact, these examples yield the smallest known representations of $\Out(F_r)$ not factoring through $\GL(r,\Z).$} 
With this preparation we can state our second main result.

\begin{thm}\label{thm:main_odd}
Let $n-m$ be odd. The complex $\HGC_{m,n}^{r,h}$, $r\geq 2$, $h\geq 1$, admits a decreasing filtration (by defect)
\beq{eq:main_filt}
\HGC_{m,n}^{r,h}=F_0^{r,h}\supset F_1^{r,h}\supset F_2^{r,h}\supset\ldots\supset F_h^{r,h}
\supset F_{h+1}^{r,h}=0,
\eeq
such that all the terms $F_i^{r,h}/F_{i+1}^{r,h}$, $i\geq 2$, are acyclic.  Thus the first term $E_1$ of the spectral sequence associated with this filtration has only two columns $E_1^{0*}=H(F_0^{r,h}/F_{1}^{r,h})$ and $E_1^{1*}=H(F_1^{r,h}/F_2^{r,h})$ described as follows:
\begin{itemize}
\item if $n$ is even, then 
\beq{equ:thm2_1}
E_1^{0*}\oplus E_1^{1*} = H(\GC^r_{B^h_r})[nr+(h-1)(n-m-2)-2]\oplus 
\begin{cases}
H(\GC^r_{\K})[nr-1] & \text{for $h=1$} \\
0 & \text{for $h=2$} \\
 H(\GC^r_{B^{h-2}_r})[nr+(h-1)(n-m-2)-1] & \text{for $h\geq 3$}
\end{cases};
\eeq
\item if $n$ is odd, then 
\beq{equ:thm2_2}
E_1^{0*}\oplus E_1^{1*} = H(\GC^r_{\Det\otimes B^h_r})[nr+(h-1)(n-m-2)-2]\oplus 
\begin{cases}
H(\GC^r_{\Det})[nr-1] & \text{for $h=1$} \\
0 & \text{for $h=2$} \\
 H(\GC^r_{\Det\otimes B^{h-2}_r})[nr+(h-1)(n-m-2)-1] & \text{for $h\geq 3$}
\end{cases}.
\eeq
\end{itemize}
 The differential $d_1\colon E_1^{0*}\to E_1^{1*}$ is trivial for $h\leq 2$.  In both cases the Lie bracket with the tripod graph 
\beq{equ:tripodgraph}
 T=\begin{tikzpicture}[scale=.5,baseline=-.65ex] \draw (0,0) -- (90:1) (0,0)--(-30:1) (0,0)--(210:1); \end{tikzpicture}
\eeq
maps $F_i^{r,h}$ in $F_{i+1}^{r,h+2}$ thus inducing a map between  the corresponding spectral sequences. 
The induced map sends the column $E_1^{0*}$ of the $(r,h)$ component identically to the column $E_1^{1*}$
of the $(r,h+2)$ component (identifying the two repeated summands $H(\GC^r_{B^h_r})$, or respectively, 
$H(\GC^r_{\Det \otimes B^h_r})$, $h\geq 1$), and it sends the column $E_1^{1*}$ to zero.
%
\end{thm}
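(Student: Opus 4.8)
The plan is to realize the hairy graph complex $\HGC_{m,n}^{r,h}$ as a decorated graph complex $\GC^r_V$ for an appropriate $\Out(F_r)$-representation $V$, and then to identify $V$ explicitly in the odd-codimension case. The key observation is that a hairy graph of loop order $r$ with $h$ hairs is, after forgetting the hairs, an $r$-loop graph, i.e., a cell of the outer space $\OS_r$; the hairs then become decorations attached to the underlying graph. Concretely, I would first set up a bijection (on the level of generators) between hairy graphs and pairs consisting of an $r$-loop core graph together with a decoration recording where the $h$ hairs attach. Summing over the placements of the hairs, the space of such decorations assembles into a representation of $\Out(F_r)$. The technical heart is to recognize this decoration space, in the odd case, as the Hochschild–Pirashvili homology of the dual numbers $A=\K[x]/x^2$ on the wedge $W_r$ in the stated bidegree, which is precisely the representation $B_r^h$ of \cite{TW2} (tensored with $\Det$ when $n$ is odd, to account for the orientation sign conventions). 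This yields the middle summand $H(\GC^r_{B_r^h})$ with its degree shift $nr+(h-1)(n-m-2)-2$.

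The next step is to explain the two-column structure of the $E_1$ page and the acyclicity of the quotients $F_i/F_{i+1}$ for $i\geq 2$. The filtration is by \emph{defect}, which I would define as a measure of how much a hair-vertex configuration deviates from the generic (trivalent-at-hairs) situation; equivalently it tracks the internal structure of the hair attachment. The point is that the associated graded differential acts \emph{only} on the decoration part, so that $F_i/F_{i+1}$ computes $\GC^r$ with coefficients in a fixed piece of a Koszul-type complex built from $A$. Because $A=\K[x]/x^2$ is Koszul (its Hochschild–Pirashvili homology is concentrated, in the relevant auxiliary grading, in homological degrees matching the auxiliary degree), the only surviving pieces are those of defect $0$ and $1$, giving the two columns; all higher-defect pieces are acyclic. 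This is where most of the work lies: one must carefully match the defect filtration on $\HGC$ with the grading on Hochschild–Pirashvili homology and invoke the computation from \cite{TW2} to read off which graded pieces vanish. The exceptional behavior at $h=1$ (where $E_1^{1*}=H(\GC^r_\K)[nr-1]$) and the vanishing at $h=2$ are boundary phenomena of this Koszul complex, corresponding to the low-degree terms $B_r^{-1}=0$ and $B_r^0=\K$ (respectively $\Det$), and I would verify these directly. The triviality of $d_1$ for $h\leq 2$ then follows because one of the two columns is zero or the differential has no room to act.

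Finally, for the statement about the Lie bracket with the tripod $T$: the bracket $[T,-]$ adds a trivalent vertex with two new hairs, raising $h$ by two and, crucially, raising the defect by one, so it sends $F_i^{r,h}$ into $F_{i+1}^{r,h+2}$ and hence induces a map of the associated spectral sequences shifting columns by one. I would check that on $E_1$ this map carries the $B_r^h$ summand in column $E_1^{0*}$ of the $(r,h)$ component isomorphically onto the $B_r^{(h+2)-2}=B_r^h$ summand in column $E_1^{1*}$ of the $(r,h+2)$ component — the two ``repeated'' copies of $H(\GC^r_{B_r^h})$ are identified by this very map — while the other column is annihilated for degree/defect reasons. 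The identification is ultimately the combinatorial statement that attaching a tripod realizes the comultiplication (or the structural map) on Hochschild–Pirashvili homology that connects consecutive auxiliary degrees; making this precise on chains, and verifying it descends correctly to homology with the right signs, is the main obstacle, since it requires tracking the interplay between the graph differential, the defect filtration, and the $A$-module structure simultaneously. The remaining sign and degree bookkeeping, including the $\Det$-twist in the odd-$n$ case, is routine once the chain-level maps are in place.
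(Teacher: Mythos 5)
Your overall strategy coincides with the paper's: rewrite $\HGC_{m,n}^{r,h}$ as a decorated graph complex with coefficients in the Hochschild--Pirashvili complex of $A_{n-m-2}$ (this identification is \eqref{equ:HGCdecooe}--\eqref{equ:HGCdecoeo}, established before the proof), filter by defect, use the concentration of the Hochschild--Pirashvili homology of the dual numbers to get a two-column $E_1$, and observe that $[T,-]$ raises the defect by one. But there are genuine gaps precisely at the points you flag as ``where most of the work lies''. First, the filtration cannot be the naive ``span of graphs of defect $\geq i$'': the associated graded of that filtration retains only the part of the differential preserving the defect, and its graded pieces are the core graph complex with coefficients in a single defect-graded \emph{chain} piece of $\HPC^\Gamma(A_{n-m-2})$ — these are not acyclic for $i\geq 2$. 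The paper instead sets $F_i^{r,h}=\{x:\ x\text{ and }dx\text{ are combinations of graphs of defect}\geq i\}$, the filtration induced by the Postnikov filtration on the coefficient system; it is this refinement that makes $F_i/F_{i+1}$ quasi-isomorphic to $\GC^r$ with coefficients in the $i$-th \emph{homology} of the coefficient complex, hence zero for $i\geq 2$ (note also that this homology sits in \emph{two} adjacent degrees per auxiliary degree $h$, namely $\ker{\rm d}_{dR}$ and $\coker\,{\rm d}_{dR}$, not in a single degree as your Koszulness remark suggests — that is exactly why there are two columns and why the second one carries $B_r^{h-2}$). Second, your justification of $d_1=0$ for $h\leq 2$ (``one column is zero or the differential has no room to act'') is vacuous for $h=1$, where both columns are nonzero; the paper's argument is that $\HGC_{m,n}^{r,1}$ depends, up to a degree shift, only on the parity of $n$, so the splitting of Theorem~\ref{thm:main_even} applies and forces the collapse.

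The largest missing piece is the proof that $[T,-]$ carries the repeated summands isomorphically onto one another on $E_1$; you name this ``the main obstacle'' but do not supply the two ideas that resolve it. (i) Since all structure maps of the coefficient system are quasi-isomorphisms, the induced map from the defect-zero homology of $\HPC^h$ to the defect-one homology of $\HPC^{h+2}$ need only be checked on a single graph, which one takes to be $W_r$. (ii) On $W_r$ one exhibits an explicit chain homotopy $\Psi$ (attaching two hairs at an internal vertex) such that $[T,-]-d\circ\Psi$, restricted to the defect-zero part of \eqref{eq:derham_dual}, is precisely the de Rham operator \eqref{equ:dRDef2}; that operator is by construction the isomorphism $\ker{\rm d}_{dR}\cong\coker\,{\rm d}_{dR}$, i.e., the identification $B_{r}^{h}\cong B_{r}^{h+2,II}$ up to shift. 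Without (i) and (ii) the assertion that ``attaching a tripod realizes the structural map connecting consecutive auxiliary degrees'' remains unproved, so as written the proposal establishes only the filtration-and-defect bookkeeping, not the content of the last part of the theorem.
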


In fact, we conjecture that the spectral sequence abuts at the $E_1$ page, so that the hairy graph homology is in fact isomorphic to the expressions in the Theorem. In case of even $n-m$, the hairy complexes have a similar filtration by defect. The splitting of Theorem~\ref{thm:main_even} implies the collapse  of the associated spectral sequence at the first page $E_1$ in this case.

\subsection*{Structure of the paper}
In section \ref{sec:HHonrose} we recall some facts about the Hochschild-Pirashvili homology that will be crucial for our work. In section \ref{sec:outer} we define the decorated graph complexes. 
Finally, section \ref{sec:proofs} contains the proofs of our main results Theorems \ref{thm:main_even} and \ref{thm:main_odd}, along with some concluding remarks.

\subsection*{Acknowledgements}
We thank B. Fresse for helpful discussions. V.T. thanks the MPIM, Bonn, and the IHES, where he spent his sabbatical and where he was working on this project, for a partial support and hospitality. T.W. has been partially supported by the Swiss National Science foundation, grant 200021\_150012, and the SwissMAP NCCR funded by the Swiss National Science foundation.

\section{Notation}
We work over a ground field $\K$ of characteristic zero unless otherwise stated. All vector spaces are assumed to be vector spaces over the ground field $\K$. Graded vector spaces are vector spaces with a $\mathbb{Z}$-grading, and we abbreviate the phrase ``differential graded'' by dg as usual.
We generally use in cohomological conventions, i.e., the differentials will have degree $+1$. In particular the grading that we use for 
the hairy graph-complex is reversed compared to the grading of the rational homotopy groups of the spaces of long embeddings $\R^m\to\R^n$ modulo immersions.\footnote{In fact in the hairy graph-complex besides the grading reversion, we  also  shift degree by $m$ so that $\HGC_{m,n}$ is endowed with a natural Lie bracket corresponding to the Browder operator in the rational homotopy of the spaces of 
embeddings~\cite{FTW}.}

\section{Recollection: Higher Hochschild Homology}\label{sec:HHonrose}
Let $\Fin$ be the category of finite sets.
A right $\Fin$-module is a contravariant functor $\Fin\to dg\Vect$ into the category of dg vector spaces, and a left  $\Fin$-module is a covariant functor $\Fin\to dg\Vect$.
 
We will consider the following examples:
\begin{itemize}
\item For $X$ some topological space we can consider the right $\Fin$-module sending a finite set $S$ to the simplicial chains on the mapping space $C(X^S)$. We denote this $\Fin$-module by $C(X^\bullet)$.
\item To a commutative coalgebra $B$ we assign the right $\Fin$-module sending the finite set $S$ to the tensor product $B^S\cong \bigotimes_{s\in S} B$. We denote this $\Fin$-module by $B^\bullet$. Dually, if $A$ is a commutative algebra, then the functor $S\mapsto A^S\cong \bigotimes_{s\in S} A$ describes a left $\Fin$-module $A_\bullet$.
\end{itemize}

The higher Hochschild(-Pirashvili) homology $\HP^{X}(B)$ can be defined as the homology of the complex of homotopy natural transformations $C(X^\bullet) \to B^\bullet$~\cite{pirash00}. Dually, the higher Hochschild homology $\HP^{X}(A)$  may be described as derived tensor product $C(X^\bullet) \bar \otimes_{\Fin} A_\bullet$. We will provide explicit models below.
Any map $f:X \to Y$ induces a map $f^*: \HP^{Y}(B)\to \HP^X(B)$ (resp. $f_* : \HP^{X}(A)\to \HP^{Y}(A)$).
Two homotopic maps induce the same map in higher Hochschild homology.

In our case we take $X$ to be a rose with $r$ petals $W_r=\vee_r S^1$.
The outer automorphism group $\Out(F_r)$ acts on $W_r$ up to homotopy and hence we obtain a representation of $\Out(F_r)$ on $\HP^{W_r}(A)$ and $\HP^{W_r}(B)$ for all $A,B$ as above.

This yields a rich source of $\Out(F_r)$ representations, considered in \cite{TW2}.

\subsection{More explicit formulas for graphs}\label{sec:moreexplicit}
We assume that $A$ is an augmented differential graded commutative algebra, and denote by $\bar A$ the augmentation ideal. One can then describe $\HP^{W_n}(A)$ more explicitly as the homology of the complex 
\[
 \HPC^{W_r}(A) = \bigoplus_{j_1,\dots,j_r} A \otimes \bigotimes_{\alpha=1}^r (\overline A[1])^{\otimes j_\alpha},
\]
with differential $d_H+d_A$, where $d_A$ is induced from the intrinsic differential on $A$ and $d_H$ is a version of the Hochschild differential~\cite{TW2}. Concretely, for $a_0, a_{j,i}\in A$, for simplicity assumed to have degree 0, we have
\begin{align*}
 &d_H(a_0;a_{1,1},\dots,a_{1,j_1}, \dots ,a_{r,1},\dots,a_{r,j_r})
 \\
 &= 
 (a_0a_{1,1};a_{1,2},\dots,a_{1,j_1}, \dots ,a_{r,1},\dots,a_{r,j_r})
 -(a_0;a_{1,1}a_{1,2},\dots,a_{1,j_1}, \dots ,a_{r,1},\dots,a_{r,j_r})
 \\& \quad\quad\quad\quad\quad \pm \cdots + (-1)^{}(a_{1,j_1}a_0;a_{1,1},\dots,a_{1,j_1-1}, \dots ,a_{r,1},\dots,a_{r,j_r})
 \\
 &\quad \pm \cdots 
 \pm 
  (a_0a_{r,1}; a_{1,1},a_{1,2},\dots,a_{1,j_1}, \dots ,a_{r,2},\dots,a_{r,j_r})
 \pm (a_0;a_{1,1},a_{1,2},\dots,a_{1,j_1}, \dots ,a_{r,1}a_{r,2},\dots,a_{r,j_r})
 \\& \quad\quad\quad\quad\quad \pm \cdots \pm(a_{r,j_r}a_0;a_{1,1},\dots,a_{1,j_1-1}, \dots ,a_{r,1},\dots,a_{r,j_r-1}).
\end{align*}
Informally speaking, we may think of the $a$'s as sitting on a wedge of $r$ circles, and the differential is the signed sum of all contractions of spaces between the $a$'s, multiplying the two elements on either side of the space.
\tikzset{nxx/.style={draw, cross out, inner sep=1pt, outer sep=1pt}}
\[
 \begin{tikzpicture}
  \node[int, label=-90:{$\scriptstyle a_0$}] (v) at (0,0) {};
  \draw (v) to[in=-30, out = 30, loop, looseness=90] 
       node[nxx,midway, label=0:{$\scriptstyle a_{1,2}$}]{} 
       node[nxx,near end, label=-90:{$\scriptstyle a_{1,1}$}]{} 
       (v)
  (v) to[in=60, out = 120, loop, looseness=90]  
       node[nxx,near start, label=180:{$\scriptstyle a_{2,3}$}]{}
       node[nxx,midway, label=90:{$\scriptstyle a_{2,2}$}]{} 
       node[nxx,near end, label=0:{$\scriptstyle a_{2,1}$}]{} 
          (v)
  (v) to[in=150, out = 210, loop, looseness=90]
       node[nxx,midway, label=180:{$\scriptstyle a_{3,1}$}]{} 
       (v);
 \end{tikzpicture}
\]

Now supose that $\Gamma$ is a graph. 
Then we may similarly define a complex computing the Hochschild-Pirashvili homology on $\Gamma$ as
\[
 \HPC^\Gamma(A) = \bigotimes_{v\in V\Gamma} A \otimes \bigotimes_{e\in E\Gamma} \left(\oplus_{j\geq 0} (\overline A[1])^{\otimes j} \right).
\]
We interpret the various factors of $A$ (resp. $\overline A$) as ``sitting'' on the vertices (respectively edges) of the graph $\Gamma$. 
\[
 \begin{tikzpicture}
  \node[int, label=60:{$\scriptstyle A$}] (v) at (0,0) {};
  \node[int, label=0:{$\scriptstyle A$}] (w1) at (0:1.5) {};
  \node[int, label=120:{$\scriptstyle A$}] (w2) at (120:1.5) {};
  \node[int, label=-120:{$\scriptstyle A$}] (w3) at (-120:1.5) {};
  \draw (v) edge (w1) edge (w2) edge (w3) 
        (w2) to
         node[nxx,midway, label=180:{$\scriptstyle \overline A$}]{} 
         node[nxx,near end, label=180:{$\scriptstyle \overline A$}]{} 
         (w3) (w2) to
         node[nxx,midway, label=60:{$\scriptstyle \overline A$}]{} 
         node[nxx,near end, label=60:{$\scriptstyle \overline A$}]{} 
         node[nxx,near start, label=60:{$\scriptstyle \overline A$}]{} 
         (w1)
        (w1) to
        node[nxx,midway, label=-60:{$\scriptstyle \overline A$}]{} 
        (w3);
 \end{tikzpicture}
\]
The differential is then again the signed sum of contractions of spaces between decorations, multiplying the decorations accordingly.
Note that we quietly assume that an orientation of each edge and orderings of edges and vertices are chosen to make the order of the tensor products above well defined.

Let $e$ be an edge of the graph $\Gamma$, and let $\Gamma/e$ be the graph obtained by contracting $e$. 
Denote the vertices that $e$ connects by $v_1,v_2$, and the corresponding vertex of $\Gamma/e$ by $v$. Then there is a canonical map of complexes 
\begin{equation}\label{equ:HPCcontract}
 \HPC^{\Gamma}(A) \to \HPC^{\Gamma/e}(A),
\end{equation}
given by projecting the factor of the tensor product corresponding to $e$ to its ``constant'' piece
\[
 \oplus_{j\geq 0} (\overline A[1])^{\otimes j} \to (\overline A[1])^{\otimes 0}= \K,
\]
and multiplying the decorations of $v_1$ and $v_2$
\[
 \underbrace{A}_{\text{at }v_1}\otimes \underbrace{A}_{\text{at }v_2} \to \underbrace{A}_{\text{at }v}.
\]
It is not hard to check that the map \eqref{equ:HPCcontract} is a quasi-isomorphism. In fact, zig-zags of edge contractions may be used to generate the full $\Aut(F_r)$ action on $\HPC^{W_r}(A)$.

Finally, suppose that $A$ carries in addition a grading by an abelian group $G$, not necessarily equal to the cohomological $\mathbb{Z}$-grading.
Then the Hochschild-Pirashvili complexes above inherit the grading. We denote the homogeneous subcomplexes of fixed degree $g\in G$ by
\[
 \HPC^{\Gamma,g}(A),
\]
and the corresponding subspace of the homology by 
\[
 \HP^{\Gamma,g}(A),
\]

\begin{rem}\label{rem:shuffle}
 It is well known that the ordinary Hochschild complex of a (dg) commutative algebra carries a natural commutative and associative product $*_{sh}$ via the shuffle map, see \cite[section 4.2]{loday}.
 One may define a similar product on the Hochschild-Pirashvili complex, which we denote by the same letter $*_{sh}$. Concretely, on $\HPC^{\Gamma}(A)$ the product is obtained by multiplying the decorations on vertices on summing over all ways of combining the decorations on edges through shuffles.
\end{rem}

\newcommand{\FHL}{L_\infty^c}
\subsection{Our main examples}\label{sec:mainexamples}
Let $\FHL(x_1,\dots, x_N)$ be the free $L_\infty$ coalgebra cogenerated by elements $x_1,\dots,x_N$ of 
degrees $\underline d=(d_1,\dots, d_N)$.
We define $A_{\underline d}$ to be the Chevalley-Eilenberg complex of $\FHL(x_1,\dots, x_N)$, considered as an augmented  differential graded commutative algebra. Concretely, it is a symmetric algebra freely generated by $\FHL(x_1,\dots, x_N)$ shifted in degree by one.
\[
 A_{\underline d} = S(\FHL(x_1,\dots, x_N)[-1]).
\]
The dg commutative algebra $A_{\underline d}$ (and $\FHL(x_1,\dots, x_N)$) are naturally $\Z^N$-graded, according to the number of variables $x_j$ ($j=1,\dots,N$) occurring in expressions.
Finally $A_{\underline d}$ is formal, the cohomology being 
\[
 A_{\underline d}' := H(A_{\underline d}) = \K[y_1,\dots,y_N]/\langle y_iy_j=0 \rangle
 \cong \K \oplus \K y_1 \oplus \cdots \K y_N,
\]
i.e., the algebra generated by elements $y_1,\dots,y_N$ of degrees $(d_1+1,\dots,d_N+1)$ such that all non-trivial products vanish.

Let us also note that elements of $\FHL(x_1,\dots, x_N)$ can be considered graphically as linear combinations of rooted trees, with leaves labelled by numbers $1,\dots,N$. 
\[
\begin{tikzpicture}[sibling distance=5em, scale=.5]
  \node {}
    child {
      child { node {1} }
      child { 
        child { 
          child { node {1} }
          child { node {2} }
          child { node {3} } }
        child { node {3} } } };
\end{tikzpicture}
\]
The differential is the sum of contractions of internal edges. Similarly, elements of $A_{\underline d} $ can be considered graphically as ``bunches'' of such trees.
\[
\begin{tikzpicture}[sibling distance=5em, scale=.5]
  \node[int] {}
      child{ child{ node {1}} 
           child{ node {1}} }
    child{ node {4}}
    child {
      child { node {1} }
      child { 
        child { 
          child { node {1} }
          child { node {2} }
          child { node {3} } }
        child { node {3} } } };
\end{tikzpicture}
\]
The differential is again the sum of contractions of internal edges, where edges incident to the root are now considered as internal
unless they connect a root to one of the leaves.

We finally remark that combining the interpretation of $\HPC^\Gamma(A_{\underline d})$ as $A_{\underline d}$-decorations on graphs from the previous sections with the interpretation of elements of $A_{\underline d}$ as ``bunches of trees'', one sees that elements of $\HPC^\Gamma(A_{\underline d})$ may be interpreted as linear combinations of ``tree decorated graphs'' such as the following.
\[
 \begin{tikzpicture}[level distance=6mm, sibling distance=3mm, coo/.style={inner sep=0,outer sep=0}]
  \node[int] (v) at (0,0) {};
  \node[int] (w1) at (0:1.5) {};
  \node[int] (w2) at (120:1.5) {};
  \node[int] (w3) at (-120:1.5) {};
  \draw (v) edge (w1) edge (w2) edge (w3) 
        (w2) to
         node[midway] (x1) {}
         node[near end](x2) {} 
         (w3) (w2) to
         node[midway] (x3) {} 
         node[near end] (x4) {} 
         node[near start] (x5) {} 
         (w1)
        (w1) to
        node[midway] (x6){} 
        (w3);
  \node[coo] at (x1) {} child[grow=left] { child { node{1} } child {node {2}} };
  \node[coo] at (x2) {}  child[grow=left] { node{2} };
  \node[coo] at (w1) {}  
    [grow=right]
      child { node{3} } 
      child { node{1} } ;
   \node[coo] at (x3) {}
    [grow=up]
      child { node{3} } 
      child { child {node{2}}
              child {node{2}} } 
       ;
 \end{tikzpicture}
\]
Similarly, elements of  $\HPC^\Gamma(A'_{\underline d})$ may be interpreted as linear combinations of ``hair decorated graphs'' such as the following.
\[
 \begin{tikzpicture}[level distance=6mm, sibling distance=3mm, coo/.style={inner sep=0,outer sep=0}]
  \node[int] (v) at (0,0) {};
  \node[int] (w1) at (0:1.5) {};
  \node[int] (w2) at (120:1.5) {};
  \node[int] (w3) at (-120:1.5) {};
  \draw (v) edge (w1) edge (w2) edge (w3) 
        (w2) to
         node[midway] (x1) {}
         node[near end](x2) {} 
         (w3) (w2) to
         node[midway] (x3) {} 
         node[near end] (x4) {} 
         node[near start] (x5) {} 
         (w1)
        (w1) to
        node[midway] (x6){} 
        (w3);
  \node[coo] at (x1) {}  child[grow=left] { node{1} };
  \node[coo] at (x2) {}  child[grow=left] { node{2} };
  \node[coo] at (w1) {}  child[grow=right] { node{3} };
  \node[coo] at (x3) {}  child[grow=up] { node{2} };
 \end{tikzpicture}
\]
Thanks to the projection $A_{\underline d}\to A_{\underline d}'$, one has a  quasi-isomorphism $\HPC^\Gamma(A_{\underline d})\to \HPC^\Gamma(A'_{\underline d})$.

\subsection{Recollection: Hochschild-Pirashvili homology of $A_d$}\label{sec:HPofAd}
Let us specialize further to the case $N=1$, i.e., of only one generator, which we denote by $x$. In this case we write $A_d=A_{(d)}$ for short. The Hochschild-Pirashvili homology of $A_d$ (or, equivalently $A_d'$) on $W_r$ (or, equivalently, any graph $\Gamma$ of loop order $r$) has been computed in \cite[section 2]{TW2}. Below we cite some facts from loc. cit.

\subsubsection{$d$ even}\label{sss331}
In this case  $\FHL(x)$ is one-dimensional, and one has $A'_d=A_d$ is 2-dimensional. For any connected graph $\Gamma$ of loop order $r$ denote by $C_\bullet(\Gamma)$ its cellular chain complex
\[
0\to C_1(\Gamma) \to C_0(\Gamma)\to 0,
\]
which according to our cohomological conventions is concentrated in degrees $-1$ and $0$.
  For the degree $d$ of $x$ even and $h\in \Z$ we may identify the Hochschild-Pirashvili complex with a symmetric power:
 \beq{equ:HPCisSC}
  \HPC^{\Gamma,h}(A_d) = S^h\left(C_\bullet(\Gamma)[-d-1]\right),
 \eeq
where the superscript $h$ refers to the piece of auxiliary degree $h$, assigning $x$ degree $1$. Indeed, since $A'_d=A_d$, only
hairs can appear as decorations. Elements of $C_0(\Gamma)$ have an odd degree $d+1$ and thus can not be repeated in the symmetric power, which correspond to the fact that only one hair can grow from a vertex.
On the other hand,
 hairs on an edge can be repeated, which  correspond to the fact that  in the symmetric power elements of $C_1(\Gamma)$ have even degree $d$ and thus can also be taken with any multiplicity. 
   It follows immediately that 
 \[
  \HP^{\Gamma,h}(A_d) = 
  S^h(H_\bullet(\Gamma)[-d-1])
  = 
  S^h(H_1(\Gamma)[-d]) \oplus H_0(\Gamma)[-d-1]\otimes S^{h-1}(H_1(\Gamma)[-d]).
 \]
Of course, in our example $H_0(\Gamma)=\K$ and $H_1(\Gamma) = \K^r$, but we will keep the above notation for reasons apparent later.
 The action of $\Out(F_r)$ in this case factors through the obvious action of $\GL(r,\mathbb{Z})$ on $H_1(\Gamma)\cong \K^r$.
  
 \subsubsection{$d$ odd}\label{sss332}
 If $d$ is odd, then $\FHL(x)$ is infinite-dimensional, however with two-dimensional cohomology given by the cofree Lie coalgebra cogenerated by $x$. Let $\Gamma$ be a finite connected graph of loop order $r$. In case of odd $d$ neither the complex $\HPC^\Gamma(A_d)$, nor $\HPC^\Gamma(A'_d)$
 can be expressed via $C_\bullet(\Gamma)$. In order to compute their cohomology we  consider the special case $\HPC^{W_r}(A'_d)$.
  Its component $\HPC^{W_r,h}(A'_d)$ is a two-term complex that may be identified with the de Rham map
 on an $n$-dimensional odd vector space
 \begin{multline}\label{eq:derham}
  0\to \bigoplus_{l+2l'=h} S^l(H_1(W_r)[-d]) \otimes S^{l'}(H_1(W_r)[-2d])  
  \xrightarrow{\rm d_{dR}} \\
  \bigoplus_{l+2l'=h-1} H_0(W_r)[-d-1] \otimes S^l(H_1(W_r)[-d]) \otimes S^{l'}(H_1(W_r)[-2d]) \to 0.
 \end{multline}
 If we denote a basis of $H_1(W_r)[-d]$ by variables $y_j$ of degree $d$ and a basis of $H_1(W_r)[-2d]$ by variables $\theta_j$ of degree $2d$, and if we interpret the spaces in the complex above as polynomials in $y_j$ and $\theta_j$, then the ``de Rham'' differential $\rm d_{dR}$ is given by the formula
 \beq{equ:dRDef}
  \rm d_{dR}f(y_1,\dots,y_n,\theta_1,\dots,\theta_n)= \sum_{j=1}^n y_j \frac\partial {\partial \theta_j} f(y_1,\dots,y_n,\theta_1,\dots,\theta_n).
 \eeq
 The first term of this complex is spanned by graphs $W_r$ that have only edges decorated by hairs, the second term being spanned by 
 similar graphs whith the only vertex of $W_r$ decorated by a hair. A monomial $\theta_j^k$ should be understood as the $j$-th circle
 having $2k$ hairs; and a monomial $y_j\theta_j^k$ should be understood as the $j$-th circle having $2k+1$ hairs.
 
 We will set
 \begin{align*}
  B_{r,d}^{h,I} &= \ker({\rm d}_{dR})
  &
  B_{r,d}^{h,II} &= \coker({\rm d}_{dR}).
 \end{align*}
Both $B_{r,d}^{h,I}$ and $B_{r,d}^{h,II}$ are $\Out(F_r)$-modules via the $\Out(F_r)$-action on the Hochschild-Pirashvili homology. Note that there is also a natural action of $\GL(r,\Z)$ on these spaces. However, the $\Out(F_r)$-action does not in general  factor through the $\GL(r,\Z)$-action.
There is a filtration (called the Hodge filtration) on the above modules, such that the induced $\Out(F_r)$-action on the associated graded spaces factors through $\GL(r,\Z)$, see \cite{TW2} for details. In fact the obvious $\GL(r,\Z)$ action that one can see is  the action on the associated graded spaces.

Furthermore one can see that $B_{r,d}^{h,I}\cong B_{r,d}^{h+2,II}[2d+1]$ since the kernel and cokernel of the de Rham differential can be identified. Concretely, an isomorphism $B_{r,d}^{h+2,II}\to B_{r,d}^{h,I}[-2d-1]$ is given by the operator \eqref{equ:dRDef}.
Finally, the odd number $d$ matters only in providing a global degree shift. We will set 
\[
 B_r^h := B_{r,1}^{h,I}[h]
\]
so that $B_r^h$ is concentrated in degree zero. Thus one has
\[
 B_{r,d}^{h,I} \cong B_r^h[-hd].
\]

\section{Outer space and ``decorated'' graph complexes}\label{sec:outer}

\subsection{Outer space and coefficient systems}
A metrized graph is a combinatorial graph together with the assignment of a non-negative number (the ``length'') to each edge, so that the sum of the lengths of the edges of any closed loop is positive.\footnote{We allow in particular graphs with short loops, i.e., edges connecting a vertex to itself.}
The (quotient of the) outer space $\OS_r$ defined by Culler and Vogtmann \cite{CV} is the quotient of the space of isomorphism classes of connected metrized $r$-loop graphs, obtained by identifying a metrized graph with the graph obtained by contracting all edges of zero lengths. 
The space $\OS_r$ is naturally an open orbi-cell complex. Its (open) cell structure is neatly encoded in the following category.
Define the objects of the category $\Gr_r$ to be $r$-loop graphs (not isomorphisms classes of graphs), and the morphisms to be generated by the following maps of graphs:
\begin{itemize}
 \item Isomorphisms of graphs.
 \item Sub-forest contractions.
\end{itemize}
Then the $d$-dimensional orbicells of $\OS_r$ correspond to isomorphism classes of $\Gr_r$ represented by graphs with $d$ edges. The attachment maps between orbicells are given by the arrows of $\Gr_r$ between these isomorphism classes.

The space $\OS_r$ plays a role similar to the classifying space of the group of outer automorphisms of the free group on $r$ generators $F_r$. More concretely, one may define a space $\OX_r$ whose points are isomorphism classes of metrized graphs, with a homotopy class of a map from a wedge product of $r$ circles, inducing an isomorphism on $\pi_1$.
One again identifies a metrized graph with the graph obtained by contracting all edges of length 0.
By precomposing the map from the wedge of circles accordingly, the group $\Out(F_r)$ acts on $\OX_r$. The stabilizer subgroup may be identified with the automorphism group of the metrized graph and is hence finite.

The space $\OX_r$ has an open cell complex structure, the cells corresponding to isomorphism classes of (non-metrized) graphs. 
We may again organize the cells into a category $\Grx_r$ whose objects are graphs with a homotopy class of a map from $\vee_r S^1$, inducing an isomorphism on $\pi_1$.
The morphisms are generated by isomorphisms of graphs and sub-forest contractions.
Again we have an action of $\Out(F_r)$ on $\Grx_r$ by pre-composing the maps from $\vee_r S^1$ accordingly.
Finally we may recover $\Gr_r$ as the quotient of $\Grx_r$, and denote the forgetful functor by
\begin{equation}\label{equ:Grproj}
\pi \colon \Grx_r \to  \Gr_r=\Grx_r/\Out(F_r).
\end{equation}

We are interested in studying local systems on outer space. The corresponding data may be nicely organized using the category $\Gr_r$.

\begin{defi}\label{def:locsys}
We define a coefficient system as a functor
\[
\Gr_r\to dg\Vect.
\]
We say that the coefficient system is a homotopy local system if the images of all arrows are  quasi-isomorphisms.
We say it is a local system if the images of all arrows are isomorphisms.
\end{defi}

We are interested  mainly in the following examples.

\begin{ex}\label{ex:Hcoeff}
 The functors 
 \begin{align*}
  C_\bullet: \Gr_r &\to dg\Vect  & H_\bullet: \Gr_r &\to dg\Vect 
  \\
  \Gamma &\mapsto C_\bullet(\Gamma)  & \Gamma &\mapsto H_\bullet(\Gamma)
 \end{align*}
 assigning to a graph its cellular chain complex (respectively its homology) is a coefficient system.
\end{ex}

\begin{ex}\label{ex:Detcoeff}
 The functor 
 \begin{align*}
  \Det: \Gr_r \to \Vect 
  \\
  \Gamma \mapsto \wedge^{r}H_1(\Gamma)
 \end{align*}
 defines a (one dimensional) coefficient system.
\end{ex}

\begin{ex}\label{ex:HPCcoeff}
Let $A$ be a (dg) commutative algebra.
Define the functor 
\[
\HP_A \colon \Gr_r \to g\Vect
\]
which assigns to a graph $\Gamma$ the Hochschild-Pirashvili homology $\HP^\Gamma(A)$.
Similarly, define the functor 
\[
\HPC_A \colon \Gr_r \to dg\Vect
\]
assigning to $\Gamma$ the Hochschild-Pirashvili complex $\HPC^\Gamma(A)$.
In particular, $\HP_A$ is the composite of $\HPC_A$ and the homology functor from $dg\Vect$ to $g\Vect$.
In case $A$ is $G$-graded, for any $g\in G$ we similarly define the functors $\HP_A^g$, $\HPC_A^g$ assigning to $\Gamma$
the $g$-part $\HP^{\Gamma,g}(A)$, respectively, $\HPC^{\Gamma,g}(A)$, of the higher Hochschild homology/complex.
\end{ex}

\begin{ex}
More generally, let $V$ be any representation of $\Out(F_r)$. Then a  local system can be 
constructed by sending a graph $\Gamma\in \ob\Gr_r$ to the vector space
\[
 \left( \bigoplus_{\Gamma'\in \ob\,\pi^{-1}(\Gamma)} V\right)_{\Out(F_r)}
\]
where $\pi$ is as in \eqref{equ:Grproj} $\Out(F_r)$ acts on $V$ and simultaneously by permuting the summands according to the action on $\pi^{-1}(\Gamma)$.

Conversely, given a local system on outer space as in the above definition, one may recover a representation of $\Out(F_r)$ as follows: First, one defines the fundamental group $\pi_1(\Gr_r)$ as the group of homotopy classes of zigzags of arrows 
\[
 W_r \to \cdot \leftarrow W_r
\]
where $W_r$ is a wedge of $r$ circles, considered as a graph. Two such zigzags are homotopic if the ``space between them'' may be triangulated by commutative diagrams, cf. the similar Definition \cite[III.C Definition 3.5]{BH}.
Clearly, any local system yields a representation of $\pi_1(\Gr_r)$, and one can check that $\pi_1(\Gr_r)=\Out(F_r)$.
\end{ex}
%

Finally, let us note that one can (of course) define direct sums and tensor products of local systems in the obvious way.

\subsection{Decorated graph complexes}
Suppose we are given a coefficient system
\[
 F \colon \Gr_r \to dg\Vect
\]
as above. Given a graph $\Gamma$, let us denote the full subcategory of $\Gr_r$ whose objects are isomorphic to $\Gamma$ by $\Gr_r^{[\Gamma]}$.
Of course the functor $F$ restricts to a functor 
\[
 F\mid_{\Gr_r^{[\Gamma]}} \colon \Gr_r^{[\Gamma]} \to dg\Vect.
\]
Furthermore, we consider the functor 
\[
 \Or \colon \Gr_r^{[\Gamma]} \to g\Vect
\]
assigning to a graph $\Gamma$ the one-dimensional graded vector space 
\[
 \Or(\Gamma) = (\K[1])^{\otimes |E\Gamma|} ,
\]
where $E\Gamma$ is the edge set of $\Gamma$. An isomorphism $f:\Gamma\to \Gamma'$ is sent by $\Or$ to the map of one-dimensional vector spaces obtained by permuting the factors in the tensor product according to the edge permutation induced by $f$. (Concretely, choosing a basis $\Or(f)$ acts by multiplication by $\pm 1$.) This functors encodes  possible choices for the orientation of the orbicells 
in $\OS_r$.

Then define the vector space
\[
 V_{[\Gamma]} = \colim\left( \Or\otimes F\mid_{\Gr_r^{[\Gamma]}} \right)
\]
as the colimit of the corresponding sub-diagram of $\Or \otimes F$. Note that $\Gr_r^{[\Gamma]}$ is a connected groupoid, thus this colimit is isomorphic to $\left(\Or(\Gamma)\otimes F(\Gamma)\right)_{G_\Gamma}$, where $G_\Gamma$ is the group of symmetries of $\Gamma$.

Then we define a graph complex $\G^r_{F}$ as follows. As a graded vector space
\[
 \G^r_{F} = \oplus_{[\Gamma]} V_{[\Gamma]},
\]
where the sum ranges over isomorphism  classes of $r$-loop graphs.
The differential is defined as 
\beq{equ:diffGCdeco}
 d (\Gamma, \varepsilon\otimes v) =(\Gamma, \varepsilon\otimes d_{F(\Gamma)} v) +
  \sum_{e\in E\Gamma} \bigl(c_e(\Gamma),  \Or(c_e)(\varepsilon) \otimes F(c_e)(v)\bigr),
\eeq
where the sum ranges over edges of the graph $\Gamma$, $c_e$ is the morphism in $\Gr_r$ contracting the edge $e$, and 
\[
 \Or(c_e) : \Or(\Gamma)= (\K[1])^{\otimes |E\Gamma|} \to \Or(c_e(\Gamma))= (\K[1])^{\otimes |Ec_e(\Gamma)|}
\]
is the morphism of degree one of one-dimensional vector spaces corresponding to removing the factor corresponding to the edge $e$. (Concretely, picking a basis, $\Or(c_e)$ acts by multiplication by an alternating sign as $e$ ranges over the edges of $\Gamma$.)

Finally, we define the dual graph complex 
\[
 \GC^r_{F} = (\G^r_{F})^*.
\]

\begin{rem}\label{r:sheaf}
A functor $F\colon \Gr_r\to dg\Vect$ is what is called a  {\it cellular orbi-cosheaf} on $\OS_r$. The complex $ \G^r_{F}$  computes the {\it Borel-Moore homology} of $\OS_r$ with coefficients in $F$~\cite[Section~6.2]{Curry}, \cite{LazVor}. The dual graph complex $\GC^r_F$
computes the {\it locally compact cohomology} of $\OS_r$ with coefficients in the {\it cellular orbi-sheaf}  $F^*\colon \Gr_r^{op}\to dg\Vect$ --
the objectwise dual of $F$.
\end{rem}

\subsection{Hairy graph complexes -- Standard definition}\label{sec:hairydef_std}
The hairy graph complexes $\HGC_{m,n}$ are combinatorial complexes of formal series of graphs with vertices of valence $1$ (hairs) or valence $\geq 3$ (internal vertices) as depicted in \eqref{equ:HGCsample}. They arise in the study of the deformation theory of the $E_n$ operads
that we briefly review in Section~\ref{s:operads}.

Each complex $\HGC_{m,n}$ is spanned by  connected graphs having some set of {\it external vertices} of valence~1, and some  set of {\it internal vertices}. The set of external vertices, which are also called hairs, must be non-empty.  We also assume that all internal vertices are of valence~$\geq 3$. We allow graphs  to have  multiple edges and loops (edges joining a vertex to itself). For such a graph define its {\it orientation set} as the union of the set of its external vertices (considered as elements of degree $m$), the set of its internal vertices (considered as elements of degree $n$), and the set of its edges (considered as elements of degree $(1-n)$). By an {\it orientation} of a graph we will understand ordering of its orientation set together with an orientation of all its edges. Two such graphs are {\it equivalent} if there is a bijection between their sets of vertices and edges respecting the adjacency structure of the graphs, orientation of the edges, and the order of the orientation sets. The space of $\HGC_{m,n}$ is the quotient space of the vector space freely spanned by such graphs modulo the orientation relations:

\vspace{.2cm}

(1) $\Gamma_1=(-1)^n\Gamma_2$ if $\Gamma_1$ differs from $\Gamma_2$ by an orientation of an edge.

(2) $\Gamma_1=\pm\Gamma_2$, where $\Gamma_2$ is obtained from $\Gamma_1$ by a permutation of the orientation set. The sign here is the Koszul sign of permutation.

\vspace{.2cm}

The differential $\partial\Gamma$ of a graph $\Gamma\in\HGC_{m,n}$ is defined as the sum of expansions of its internal vertices.  The orientation set of a new graph is obtained by adding the new vertex and the new edge as the first and second elements to the orientation set, and by orienting the new edge from the old vertex to the new one.  
 We define the degree of a graph as the sum of degrees of the elements from its orientation set minus $m$. We need this shift by $m$ 
 to unable the complex $\HGC_{m,n}$ with the dg Lie algebra structure, which is related to the Browder operator in the rational homotopy of the spaces of long embeddings modulo immersions and also appears naturally in the operad deformation theory~\cite{FTW,TW}.

Combinatorially, the Lie bracket connects a hair of one graph to an internal vertex of another graph in all possible ways as indicated in the following picture.
\beq{equ:grbracket}
\left[ 
\begin{tikzpicture}[baseline=-.8ex]
\node[draw,circle] (v) at (0,.3) {$\Gamma$};
\draw (v) edge +(-.5,-.7) edge +(0,-.7) edge +(.5,-.7);
\end{tikzpicture}
,
\begin{tikzpicture}[baseline=-.65ex]
\node[draw,circle] (v) at (0,.3) {$\Gamma'$};
\draw (v) edge +(-.5,-.7) edge +(0,-.7) edge +(.5,-.7);
\end{tikzpicture}
\right]
=
\sum
\begin{tikzpicture}[baseline=-.8ex]
\node[draw,circle] (v) at (0,1) {$\Gamma$};
\node[draw,circle] (w) at (.8,.3) {$\Gamma'$};
\draw (v) edge +(-.5,-.7) edge +(0,-.7) edge (w);
\draw (w) edge +(-.5,-.7) edge +(0,-.7) edge +(.5,-.7);
\end{tikzpicture}
\mp 
\sum
\begin{tikzpicture}[baseline=-.8ex]
\node[draw,circle] (v) at (0,1) {$\Gamma'$};
\node[draw,circle] (w) at (.8,.3) {$\Gamma$};
\draw (v) edge +(-.5,-.7) edge +(0,-.7) edge (w);
\draw (w) edge +(-.5,-.7) edge +(0,-.7) edge +(.5,-.7);
\end{tikzpicture}\, .
\eeq


\subsection{Hairy graph complexes as decorated graph complexes}
The hairy graph complexes $\HGC_{m,n}$ from the previous section split into direct products of subcomplexes $\HGC_{m,n}^{r,h}$ of fixed loop number $r$ and fixed number of hairs $h$, so that 
\[
 \HGC_{m,n} = \prod_{r\geq 0 \atop {h\geq 1}} \HGC_{m,n}^{r,h}.
\]

One easy but crucial observation is that the pieces $\HGC_{m,n}^{r,h}$, $r\geq 2$, $h\geq 1$, may be equally well defined as decorated graph complexes.
More concretely, for $n-m$ even note that hairs are odd objects, and hence there are no vertices with multiple hairs in graphs in $\HGC_{m,n}^{r,h}$. For $r\geq 2$ each hairy graph $\Gamma \in\HGC_{m,n}^{r,h}$ thus consists of a ``core graph'' of vertices that have at least 3 non-hair neighbors, connected by strings of hairs as the following picture illustrates.
\[
\underbrace{
 \begin{tikzpicture}[level distance=6mm, sibling distance=3mm, coo/.style={inner sep=0,outer sep=0}, baseline=-.65ex]
  \node[int] (v) at (0,0) {};
  \node[int] (w1) at (0:1.5) {};
  \node[int] (w2) at (120:1.5) {};
  \node[int] (w3) at (-120:1.5) {};
  \draw (v) edge (w1) edge (w2) edge (w3) 
        (w2) to
         node[midway, int] (x1) {}
         node[near end, int](x2) {} 
         (w3) (w2) to
         node[midway, int] (x3) {} 
         node[near end, int] (x4) {} 
         node[near start, int] (x5) {} 
         (w1)
        (w1) to
        node[midway, int] (x6){} 
        (w3)
        (x3) edge +(60:.5)
        (x4) edge +(60:.5)
        (x5) edge +(60:.5)
        (x1) edge +(180:.5)
        (x2) edge +(180:.5)
        (x6) edge +(-60:.5)
        (w3) edge +(-120:.5);
        ;
 \end{tikzpicture}
 }_{\text{hairy graph}}
\quad \mbox{\huge $\leftrightarrow$ }\quad
\underbrace{
 \begin{tikzpicture}[level distance=6mm, sibling distance=3mm, coo/.style={inner sep=0,outer sep=0}, baseline=-.65ex]
  \node[int] (v) at (0,0) {};
  \node[int] (w1) at (0:1.5) {};
  \node[int] (w2) at (120:1.5) {};
  \node[int] (w3) at (-120:1.5) {};
  \draw (v) edge (w1) edge (w2) edge (w3) 
        (w2) to
          node[midway,coo] (x1) {}
          node[near end,coo](x2) {} 
         (w3) (w2) to
          node[midway,coo] (x3) {} 
          node[near end,coo] (x4) {} 
          node[near start,coo] (x5) {} 
         (w1)
        (w1) to
         node[midway,coo] (x6){} 
        (w3)
        ;
\draw[red] (x3) edge +(60:.5)
        (x4) edge +(60:.5)
        (x5) edge +(60:.5)
        (x1) edge +(180:.5)
        (x2) edge +(180:.5)
        (x6) edge +(-60:.5)
        (w3) edge +(-120:.5);
\node[red] (t) at (1.3,-1.5) {decorations};
\draw[red,-latex] (t) edge (0.6,-.9) edge (-.7,-1.5);
 \end{tikzpicture}
 }_{\text{core}}
\]
Comparing this to the graphical interpretation for the  Hochschild-Pirashvili complex of a graph from sections \ref{sec:moreexplicit} and \ref{sec:mainexamples}, we see that the complex $\HGC_{m,n}^{r,h}$ essentially agrees with the complex $\GC^r_F$ for $F$ the coefficient system given by the Hochschild-Pirashvili complex as in Example \ref{ex:HPCcoeff}. More precisely, taking into account the degrees and signs we find that if $n$, $m$ are even and $r\geq 2$ then
\beq{equ:HGCdecoee}
 \HGC_{m,n}^{r,h} = \GC^r_{\HPC^h_{A_{n-m-2}}}[nr+m-n].
\eeq
If $n$ and $m$ are odd, then additional signs in the definition of $\HGC_{m,n}$ yield the identification
\beq{equ:HGCdecooo}
 \HGC_{m,n}^{r,h} = \GC^r_{\HPC^h_{A_{n-m-2}}\otimes \Det}[nr+m-n].
\eeq
where $\Det$ is the coefficient system from example \ref{ex:Detcoeff}.

For $n-m$ odd the hairs on hairy graphs are even objects. In particular, hairy graphs $\Gamma\in \HGC_{m,n}^{r,h}$ for $r\geq 2$ may contain multiple hairs at a vertex, and also tree-like ``antennas'' as in the following example.
Still, recursively cutting the antennas we may view each hairy graph $\Gamma\in \HGC_{m,n}$ as a ``core'' graph, whose edges are decorated by strings of bunches of ``antennas'', as the following picture shall indicate.
 \[
 \underbrace{
 \begin{tikzpicture}[level distance=6mm, sibling distance=3mm, coo/.style={inner sep=0,outer sep=0}, baseline=-.65ex]
  \node[int] (v) at (0,0) {};
  \node[int] (w1) at (0:1.5) {};
  \node[int] (w2) at (120:1.5) {};
  \node[int] (w3) at (-120:1.5) {};
  \draw (v) edge (w1) edge (w2) edge (w3) 
        (w2) to
         node[midway] (x1) {}
         node[near end](x2) {} 
         (w3) (w2) to
         node[midway] (x3) {} 
         node[near end] (x4) {} 
         node[near start] (x5) {} 
         (w1)
        (w1) to
        node[midway] (x6){} 
        (w3);
  \node[int] at (x1) {} child[grow=left] { node[int]{} child { node{} } child {node {}} };
  \node[int] at (x2) {}  child[grow=left] { node{} };
  \node[int] at (w1) {}  
    [grow=right]
      child { node{} } 
      child { node{} } ;
   \node[int] at (x3) {}
    [grow=up]
      child { node{} } 
      child { node[int]{} child {node{}}
              child {node{}} } 
       ;
 \end{tikzpicture}
  }_{\text{hairy graph}}
\quad \mbox{\huge $\leftrightarrow$ }\quad
\underbrace{
 \begin{tikzpicture}[level distance=6mm, sibling distance=3mm, coo/.style={inner sep=0,outer sep=0}, baseline=-.65ex]
  \node[int] (v) at (0,0) {};
  \node[int] (w1) at (0:1.5) {};
  \node[int] (w2) at (120:1.5) {};
  \node[int] (w3) at (-120:1.5) {};
  \draw (v) edge (w1) edge (w2) edge (w3) 
        (w2) to
         node[midway] (x1) {}
         node[near end](x2) {} 
         (w3) (w2) to
         node[midway] (x3) {} 
         node[near end] (x4) {} 
         node[near start] (x5) {} 
         (w1)
        (w1) to
        node[midway] (x6){} 
        (w3);
  \node[coo] at (x1) {} [red] child[grow=left] { child { node{} } child {node {}} };
  \node[coo] at (x2) {} [red] child[grow=left] { node{} };
  \node[coo] at (w1) {}  
    [grow=right, red]
      child { node{} } 
      child { node{} } ;
   \node[coo] at (x3) {}
    [grow=up, red]
      child { node{} } 
      child { child {node{}}
              child {node{}} } 
       ;
       \node[red] (t) at (1.5,1.5) {decorations};
\draw[red,-latex] (t) edge (0.6,.9) edge (1.7,.3);
 \end{tikzpicture}
 }_{\text{core}}
 \]
Comparing this again to the definition of the Hochschild-Pirashvili complex for graphs from sections \ref{sec:moreexplicit} and \ref{sec:mainexamples}, we can readily identify 
\beq{equ:HGCdecooe}
 \HGC_{m,n}^{r,h} = \GC^r_{\HPC^h_{A_{n-m-2}}}[nr+m-n]
\eeq
for $n$ even, $m$ odd, $r\geq 2$ and 
\beq{equ:HGCdecoeo}
 \HGC_{m,n}^{r,h} = \GC^r_{\HPC^h_{A_{n-m-2}}\otimes \Det}[nr+m-n]
\eeq
for $n$ odd, $m$ even and $r\geq 2$.

For the purposes of this paper, the reader may take \eqref{equ:HGCdecoee}-\eqref{equ:HGCdecoeo} as the primary definitions of the hairy graph complexes. The only downside is that the Lie bracket on $\HGC_{m,n}$ is not readily visible using this definition.

Furthermore, the above definitions do not capture the loop orders $r=0$ and $r=1$. 
Fortunately, in loop order $\leq 1$ the hairy graph cohomology is known, with the following result~\cite[Proposition 3.3]{Turchin3}.
\begin{thm}\label{thm:loop01}
 The zero-loop and one-loop pieces $H(\HGC_{m,n}^0)$ and $H(\HGC_{m,n}^1)$ of the hairy graph cohomology satisfy
\[
 H(\HGC_{m,n}^0) =
 \begin{cases}
  \K \, \linegraph & \text{for $n-m$ even} \\
  \K \, \tripodgraph & \text{for $n-m$ odd},
 \end{cases}
\]
and
\[
 H(\HGC_{m,n}^1) = 
 \prod_{k \geq 1 \atop{k\equiv Ln+1 \text{ mod }2L } }
 \K 
\begin{tikzpicture}[baseline=-.65ex, scale=.7]
\node[int] (v1) at (0:1) {};
\node[int] (v2) at (72:1) {};
\node[int] (v3) at (144:1) {};
\node[int] (v4) at (216:1) {};
\node (v5) at (-72:1) {$\cdots$};
\draw (v1) edge (v2) edge (v5) (v3) edge (v2) edge (v4) (v4) edge (v5);
\draw (v1)  edge +(0:.6)  ; 
\draw (v2) edge +(72:.6)  ; 
\draw (v3) edge +(144:.6) ; 
\draw (v4)  edge +(226:.6)  ; 
\end{tikzpicture}
\quad(\text{$k$ vertices}),
\]
where $L=1$ if $n-m$ is even, and $L=2$ if $n-m$ is odd.
\end{thm}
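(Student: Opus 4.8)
The plan is to treat the two loop orders separately, in each case first reducing to a small span of ``irreducible'' graphs on which the differential vanishes, and then deciding by an orientation/sign computation which of these graphs is actually nonzero in $\HGC_{m,n}$.

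\emph{Loop order $0$.} Here every graph is a hairy tree, so I would organize $\HGC_{m,n}^0=\bigoplus_{h\geq 1}\HGC_{m,n}^{0,h}$ by the number of hairs $h$. For fixed $h$ the underlying combinatorial object is the complex of leaf-labelled trees with all internal vertices of valence $\geq 3$, graded by the number of internal vertices, with the vertex-expansion differential; one recovers $\HGC_{m,n}^{0,h}$ from it by passing to $S_h$-coinvariants twisted by the one-dimensional character $\epsilon_h$ recording the Koszul signs of permuting the $h$ hairs (degree $m$) together with the induced edge permutation (degree $1-n$). Conceptually this total complex may be identified, up to shift, with the deformation complex of the canonical morphism $\Lie_m\to\Lie_n$ of degree-shifted Lie operads, whose total cohomology is one-dimensional (the rescaling class); I would use this — equivalently the Koszulness of $\Lie$ — to conclude that $H(\HGC_{m,n}^{0,h})$ vanishes for all but one value of $h$. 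The surviving value and generator are then pinned down by a direct sign check: a transposition of the two hairs of the line $\linegraph$ reverses its single edge and acts by $(-1)^{m+n}$, which is $+1$ exactly when $n-m$ is even, whereas a transposition of two hairs of the tripod $\tripodgraph$ swaps two edges without reversal and acts by $-(-1)^{m+n}$, which is $+1$ exactly when $n-m$ is odd. Hence the line survives for $n-m$ even and the tripod for $n-m$ odd, while all larger trees die because the relevant $S_h$-cohomology carries neither the trivial nor the sign isotypic component.

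\emph{Loop order $1$.} Every connected graph is now a single cycle with hairy trees grafted onto it, and I would filter by the number of vertices not lying on the cycle (the ``tree part''). The associated graded differential acts only on the grafted trees, so by the relative version of the loop-$0$ acyclicity its cohomology is concentrated on the bare polygons $P_k$ — the $k$-gon carrying exactly one hair at each (necessarily trivalent) vertex. Since a trivalent vertex cannot be expanded, $\partial P_k=0$, so each surviving $P_k$ is a cohomology class and it remains only to decide which are nonzero. This is again a symmetry computation, now for the dihedral automorphism group $D_k$ of $P_k$: the rotation by one step acts by $(-1)^{(k-1)(n-m)}$, which is automatically $+1$ when $n-m$ is even and forces $k$ odd when $n-m$ is odd, accounting for the jump from modulus $2$ to modulus $4$ (that is, from $L=1$ to $L=2$). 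The reflections then impose the finer congruence, and assembling the two signs yields precisely $k\equiv Ln+1\pmod{2L}$; for these $k$ one checks in addition that $P_k$ is not a boundary. As a conceptual cross-check I would observe that the Hochschild--Pirashvili homology on the circle $W_1=S^1$ is ordinary Hochschild homology, so $\HP^{W_1}(A_{n-m-2})$ is the Hochschild homology of the dual numbers $\K[x]/x^2$, whose well-known ``periodic'' structure produces exactly one class per admissible degree, matching the family $\{P_k\}$ once the $\Out(F_1)=\Z/2$ action and the edge orientations are taken into account.

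The main obstacle, in both loop orders, is the vanishing statement rather than the surviving class. For loop $0$ one must show that the $S_h$-representation obtained from the tree complex contains no copy of the relevant one-dimensional character for every $h$ other than the distinguished value ($h=2$ in the even case, $h=3$ in the odd case) — equivalently, carry out the operadic cohomology computation. For loop $1$ one must establish the acyclicity of the grafted-tree part and then perform the dihedral sign bookkeeping carefully enough to extract the exact residue $Ln+1$ modulo $2L$. The delicate point throughout is the tracking of the Koszul and edge-orientation signs, since it is precisely these signs that decide between $\linegraph$ and $\tripodgraph$ and that select the admissible polygon lengths.
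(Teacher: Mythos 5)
This theorem is not proved in the paper at all: it is quoted from \cite[Proposition~3.3]{Turchin3}, so the comparison here is with the standard argument of Arone--Turchin rather than with anything in the present text. Your outline follows essentially that standard route, and the parts you actually carry out are correct: the identification of the tree-level complex with the (cyclic, unrooted) bar-type complex of $\Com$ whose cohomology is concentrated on trivalent trees by Koszulness, the sign checks showing that a hair transposition acts by $(-1)^{m+n}$ on $\linegraph$ and by $-(-1)^{m+n}$ on $\tripodgraph$, and the dihedral bookkeeping for the hedgehogs (the rotation sign $(-1)^{(k-1)(n-m)}$ forcing $k$ odd in odd codimension, the reflections then cutting the residue down to $k\equiv Ln+1 \bmod 2L$) all come out right.

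What keeps this from being a proof is exactly what you flag yourself: the two vanishing statements are asserted, not established. For loop order $0$ the needed fact is a concrete representation-theoretic one: after the Koszulness reduction, $H(\HGC_{m,n}^{0,h})$ is the isotypic component of a one-dimensional character of $S_h$ inside the cyclic Lie operad component $\Lie((h))$ (suitably sign-twisted), and one must invoke that $\Lie((h))$ contains the trivial character only for $h=2$, the sign character only for $h=3$, and neither for $h\geq 4$ (e.g.\ via $\mathrm{Ind}_{S_{h-1}}^{S_h}\Lie(h-1)\cong \Lie(h)\oplus \Lie((h))$ and Frobenius reciprocity); without this the argument does not close. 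For loop order $1$ your reduction to ``bare polygons'' glosses over a real point in odd codimension: there the one-loop complex also contains polygons with several hairs, or whole trees, attached at a single cycle vertex, and these lie in the same filtration stratum as the hedgehogs, so the claim that the associated graded cohomology sits on the $P_k$ requires the full acyclicity of the rooted ``antenna'' complex --- in the language of this paper, the formality quasi-isomorphism $A_{\underline d}\to A_{\underline d}'$ of Section~\ref{sec:mainexamples} --- together with the two-term de Rham complex \eqref{eq:derham} specialized (with care, since the identifications \eqref{equ:HGCdecoee}--\eqref{equ:HGCdecoeo} are only stated for $r\geq 2$) to $r=1$. Your Hochschild-homology-of-dual-numbers cross-check is the right heuristic for this, but as written it is a consistency check rather than the missing argument.
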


\section{Proof of Theorem~\ref{thm:main_even}}\label{s:th1}
Applying~\eqref{equ:HGCdecoee}-\eqref{equ:HGCdecooo} to the equivalence of coefficient systems~\eqref{equ:HPCisSC} in case of even codimension $n-m$, we express the hairy graph-complexes as follows
\beq{eq:hgc_gc}
\HGC_{m,n}^{h,r}\cong \GC^r_{\Det^{\otimes n}\otimes S^h(C_\bullet[-n+m+1])}[nr+m-n],
\eeq
where $C_\bullet\colon \Gr_r\to dg\Vect$ is the coefficient system from Example~\ref{ex:Hcoeff}.

Denote by $\tilde C_\bullet$ the coefficient system assigning to   $\Gamma\in\Gr_r$ the kernel of the augmentation map
\beq{eq:pi}
\pi\colon C_\bullet(\Gamma)\to\K\cong H_0(\Gamma).
\eeq

\begin{lemma}\label{l:splitting}
There is a natural isomorphism of coefficient systems (i.e., functors $\Gr_r\to dg\Vect$)
\[
\K\oplus \tilde C_\bullet\cong C_\bullet.
\]
\end{lemma}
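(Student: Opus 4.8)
The plan is to realize the claimed isomorphism as a natural splitting of the short exact sequence of coefficient systems
\[
0 \to \tilde C_\bullet \to C_\bullet \xrightarrow{\pi} \K \to 0,
\]
where $\K$ denotes the constant functor concentrated in degree $0$. Since $\pi$ is zero on $C_1(\Gamma)$ (degree $-1$) and surjective on $C_0(\Gamma)$ (degree $0$), the inclusion $\tilde C_\bullet \hookrightarrow C_\bullet$ is the identity in degree $-1$ and the only content of the lemma is to produce a \emph{natural} section $s\colon \K \to C_\bullet$ of $\pi$, landing in $C_0$. Given such an $s$, the assignment $(\lambda, c)\mapsto \lambda\, s_\Gamma(1) + c$ defines the desired isomorphism $\K\oplus \tilde C_\bullet \xrightarrow{\sim} C_\bullet$; note there is no compatibility to check in degree $-1$, because $\partial\colon C_1\to C_0$ already lands in $\tilde C_0 = \ker(\pi|_{C_0})$ for connected $\Gamma$.

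The section I would use is the degree-weighted barycenter
\[
 s_\Gamma(1) \;=\; \frac{1}{2(r-1)} \sum_{v\in V\Gamma} \bigl(\deg v - 2\bigr)\, v \;\in\; C_0(\Gamma),
\]
which is well defined for $r\geq 2$. Two immediate checks: first, $s_\Gamma(1)$ is a cocycle for trivial degree reasons (there is no differential out of degree $0$), so $s$ is a chain map; second, $\pi\bigl(s_\Gamma(1)\bigr) = \frac{1}{2(r-1)}\sum_{v}(\deg v - 2) = \frac{1}{2(r-1)}\bigl(2|E\Gamma| - 2|V\Gamma|\bigr) = 1$, using that a connected $r$-loop graph satisfies $|E\Gamma|-|V\Gamma| = r-1$. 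Hence $\pi\circ s = \mathrm{id}_\K$.

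The heart of the argument, and the step I expect to be the main obstacle, is naturality of $s$ as a transformation of functors on $\Gr_r$. Invariance under graph isomorphisms is clear, since $\deg$ is preserved. The essential point is compatibility with sub-forest contractions. As every such contraction is a composite of contractions of single non-loop edges, it suffices to treat the contraction $c_e\colon \Gamma \to \Gamma/e$ of an edge $e$ joining distinct vertices $v_1,v_2$ to a new vertex $w$. The key combinatorial identity is that the weight $\mu(v):=\deg v - 2$ is \emph{additive under merging}: $\deg w = \deg v_1 + \deg v_2 - 2$, whence $\mu(w) = \mu(v_1)+\mu(v_2)$, while the total weight $2(r-1)$ and hence the normalization are unchanged because $r$ is preserved. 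Since $C_0(c_e)$ sends $v_1,v_2\mapsto w$ and fixes all other vertices, this identity gives exactly $C_0(c_e)\bigl(s_\Gamma(1)\bigr) = s_{\Gamma/e}(1)$, establishing naturality. I would remark that the normalization is the only place requiring $r\geq 2$, which is precisely the range in which the lemma is invoked (the loop orders $r\leq 1$ being covered separately by Theorem~\ref{thm:loop01}). This completes the splitting and the proof.
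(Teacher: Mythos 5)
Your proposal is correct and follows essentially the same route as the paper: the same weighted-barycenter section $s_\Gamma(1)=\frac{1}{2r-2}\sum_{v}(\mathit{val}(v)-2)\,v$, the same Euler-characteristic normalization, and the same additivity identity $\mathit{val}(w)-2=(\mathit{val}(v_1)-2)+(\mathit{val}(v_2)-2)$ under edge contraction to establish naturality. The additional checks you spell out (chain-map property, reduction of forest contractions to single edges, the role of $r\geq 2$) are all consistent with, and slightly more explicit than, the paper's argument.
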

\begin{proof}
In order to show this splitting we have to construct a section $s\colon \K\to C_\bullet$ to the projection $\pi\colon C_\bullet\to \K$.
On a graph $\Gamma$ this section 
\[
H_0(\Gamma)\cong \K \to C_0(\Gamma)
\]
sends the generator 1 to 
\[
\frac 1{2r-2}\sum_{v\in V\Gamma} (\mathit{val}(v)-2)\cdot v.
\]
Here $\mathit{val}(v)$ stays for the valence of a vertex $v$, and $V\Gamma$ (below, respectively, $E\Gamma$) is the set of vertices (respectively, edges) of~$\Gamma$. One has that the sum of coefficients $\sum_{v\in V\Gamma} (\mathit{val}(v)-2)=2|E\Gamma|-2|V\Gamma|$ which is minus double Euler characteristic of $\Gamma$, i.e. $2r-2$. The additional prefactor $(\mathit{val}(v)-2)$ makes the transformation natural in $\Gamma$. Concretely, the contraction of an edge between vertices $v$ and $v'$ produces a vertex $w$ such that
\[
 \mathit{val}(v)-2 = (\mathit{val}(v)-2) + (\mathit{val}(v')-2).
\]
\end{proof}

\begin{cor}\label{cor:formal}
The coefficient system $C_\bullet\colon \Gr_r\to dg\Vect$ is formal, i.e.  there is a natural quasi-isomorphism:
\[
H_\bullet \stackrel{\sim}{\Rightarrow} C_\bullet.
\]
\end{cor}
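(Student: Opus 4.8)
The plan is to leverage Lemma \ref{l:splitting} to reduce the claim to a statement about the reduced coefficient system $\tilde C_\bullet$, and then to write down an explicit natural transformation. By the lemma we have a natural splitting $C_\bullet\cong \K\oplus \tilde C_\bullet$, where $\tilde C_\bullet(\Gamma)$ is the two-term complex $C_1(\Gamma)\xrightarrow{\partial}\tilde C_0(\Gamma)$ placed in degrees $-1$ and $0$, with $\tilde C_0(\Gamma)=\ker\pi$. For a connected graph $\Gamma$ the boundary map surjects onto $\tilde C_0(\Gamma)$ (this is just the vanishing of the reduced homology $\tilde H_0(\Gamma)$), so that $H^0(\tilde C_\bullet(\Gamma))=0$ and $H^{-1}(\tilde C_\bullet(\Gamma))=\ker\partial=H_1(\Gamma)$. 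On the other side, $H_\bullet=\K\oplus H_1$ with $H_0=\K$ in degree $0$ and $H_1$ in degree $-1$. Thus the desired natural quasi-isomorphism $H_\bullet\stackrel{\sim}{\Rightarrow}C_\bullet$ amounts to the identity on the $\K$-summand together with a natural quasi-isomorphism $H_1\Rightarrow \tilde C_\bullet$.

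The latter I would construct tautologically. Since $H_1(\Gamma)=\ker\partial$ is literally a subspace of $C_1(\Gamma)$, placing it in degree $-1$ gives an inclusion $H_1(\Gamma)\hookrightarrow \tilde C_\bullet(\Gamma)$. This is a chain map: the image lands in $\ker\partial$, and the differential on $H_1$ (viewed as a complex with zero differential) vanishes. It is an objectwise quasi-isomorphism because the induced map $H^{-1}(H_1)=H_1(\Gamma)\to H^{-1}(\tilde C_\bullet(\Gamma))=\ker\partial$ is the identity, while both sides vanish in degree $0$. Assembling with the section $s\colon\K\to C_\bullet$ of Lemma \ref{l:splitting}, which satisfies $\pi\circ s=\mathrm{id}$ and hence represents a generator of $H^0(C_\bullet(\Gamma))=H_0(\Gamma)$, yields a chain map $\phi\colon H_\bullet(\Gamma)\to C_\bullet(\Gamma)$ inducing isomorphisms in both degrees $-1$ and $0$.

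The only substantive point is naturality, which I expect to be the main (and essentially the only) obstacle. The morphisms of $\Gr_r$ are generated by isomorphisms and sub-forest contractions, and every sub-forest contraction factors into contractions of single non-loop edges, so it suffices to treat those two generating types. For isomorphisms both components of $\phi$ are manifestly natural. For a single edge contraction $c_e\colon\Gamma\to\Gamma/e$ the $\K$-component is natural by Lemma \ref{l:splitting}, which is precisely the role of the prefactor $(\mathit{val}(v)-2)$ in the formula for $s$. The $H_1$-component is natural because the functorial map $H_\bullet(c_e)\colon H_1(\Gamma)\to H_1(\Gamma/e)$ is, by definition of the coefficient system $H_\bullet$, the map induced on homology by the cellular chain map $C_\bullet(c_e)$; hence it is nothing but the restriction of $C_\bullet(c_e)$ to $\ker\partial$, and the square relating the inclusions for $\Gamma$ and $\Gamma/e$ commutes tautologically. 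Once this compatibility is recorded, combining the two natural, objectwise quasi-isomorphic components produces the required natural quasi-isomorphism $H_\bullet\stackrel{\sim}{\Rightarrow}C_\bullet$.
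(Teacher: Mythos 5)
Your proposal is correct and follows essentially the same route as the paper: the natural transformation is the tautological inclusion $H_1(\Gamma)=\ker\partial\hookrightarrow C_1(\Gamma)$ in degree $-1$ together with the section $s\colon H_0(\Gamma)\cong\K\to C_0(\Gamma)$ from Lemma~\ref{l:splitting} in degree $0$. The extra verifications you supply (surjectivity of $\partial$ onto $\ker\pi$ for connected graphs, and naturality under edge contractions) are exactly the points the paper leaves implicit.
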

\begin{proof}
On a graph $\Gamma$ the natural transformation is defined as follows.
The map 
\[
H_1(\Gamma) = \ker(C_1(\Gamma) \to C_0(\Gamma)) \hookrightarrow C_1(\Gamma)
\]
is the natural inclusion. The map $s\colon
H_0(\Gamma)\cong \K \to C_0(\Gamma)$ is as defined in the proof of the lemma above. 
\end{proof}

This formality and essentially equivalent to it the splitting from Lemma~\ref{l:splitting} is the main reason for the splitting in the hairy graph-homology. Indeed, the splitting of the lemma implies the splitting of the coefficient system:
\begin{multline}\label{eq:coef_split}
S^h(C_\bullet[-n+m+1])\cong S^h(\K[-n+m+1]\oplus \tilde C_\bullet[-n+m+1])\cong\\
S^h(\tilde C_\bullet[-n+m+1])\oplus S^{h-1}(\tilde C_\bullet[-n+m+1])[-n+m+1].
\end{multline}
The complexes $\HGC_{m,n}^{h,r,I}$ and $\HGC_{m,n}^{h,r,II}$ are defined as the direct summands of $\HGC_{m,n}^{h,r}$ arising through this splitting into these two coefficient systems and isomorphism~\eqref{eq:hgc_gc}. Since these coefficient systems are formal, 
one has
\begin{multline*}
H(\HGC_{m,n}^{h,r,I}) = H(\GC^r_{\Det^{\otimes n}\otimes S^h(\tilde C_\bullet[-n+m+1])})[nr+m-n] = \\
=H(\GC^r_{\Det^{\otimes n}\otimes S^h(H_1[-n+m+2])})[nr+m-n]= 
H(\GC^r_{\Det^{\otimes n}\otimes S^hH_1})[nr+(h-1)(n-m-2)-2].
\end{multline*}
And similarly for the second summand,
\[
H(\HGC_{m,n}^{h,r,II})= H(\GC^r_{\Det^{\otimes n}\otimes S^{h-1}H_1})[nr+(h-1)(n-m-2)-1].
\]

The map $\pi\colon C_\bullet\to\K$ can be extended as derivation to the symmetric power of $C_\bullet[-n+m+1]$ defining a
map of coefficient systems
\beq{eq:D_pi}
D_\pi\colon S^{h+1} (C_\bullet[-n+m+1])\Rightarrow S^h (C_\bullet[-n+m+1])[-n+m+1].
\eeq

\begin{prop}\label{P:L}
The map $\frac 12[L,-]\colon \HGC_{m,n}^{h,r}\to \HGC_{m,n}^{h+1,r}[-n+m+1]$ in view of identification~\eqref{eq:hgc_gc} 
is described as the map
\[
\GC^r_{id\otimes D_\pi}\colon \GC^r_{\Det^{\otimes n}\otimes S^h (C_\bullet[-n+m+1])[-n+m+1]}
\to \GC^r_{\Det^{\otimes n}\otimes S^{h+1} (C_\bullet[-n+m+1])}
\]
induced by the map $D_\pi$ of coefficient systems.
\end{prop}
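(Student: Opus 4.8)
The plan is to unwind both maps combinatorially under the identification \eqref{eq:hgc_gc} and to check that they agree term by term, signs included. I would start with an explicit combinatorial description of $\frac12[L,-]$. Since $L$ is a single edge carrying two external hairs and has no internal vertices, the summand of the bracket \eqref{equ:grbracket} that would attach a hair of $\Gamma$ to an internal vertex of $L$ is empty; only the summand attaching a hair of $L$ to an internal vertex of $\Gamma$ survives. Gluing a hair of $L$ to an internal vertex $w$ turns the edge of $L$ into a new edge out of $w$ ending in a new univalent vertex, i.e. it grows one new hair out of $w$. The two hairs of $L$ produce the same graph, which is exactly the factor of $2$ removed by the prefactor $\frac12$. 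Hence $\frac12[L,\Gamma]=\sum_{w}(\Gamma\text{ with a new hair grown from }w)$, the sum running over all internal vertices $w$ of $\Gamma$.

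Next I would translate this to the decorated side via the core/decoration dictionary of Section~\ref{sec:hairydef_std}. In even codimension the internal vertices of a hairy graph are of two kinds: the core vertices, which are the vertices of the underlying graph $\Gamma\in\Gr_r$, and the bivalent ``subdivision'' vertices lying on the hair-strings, each already carrying one hair and corresponding to a decoration in $C_1(\Gamma)$. Growing a hair out of a subdivision vertex yields a vertex with two hairs, hence vanishes because hairs are odd in this case; this matches the fact that $\pi$ annihilates $C_1(\Gamma)$. Growing a hair out of a core vertex $v$ inserts the class $v\in C_0(\Gamma)$ into the symmetric product of decorations, vanishing automatically if $v$ already carries a hair since $v$ has odd degree in $C_\bullet[-n+m+1]$ and so squares to zero. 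Therefore $\frac12[L,-]$ is the operation of multiplying the decoration by $\sum_{v\in V\Gamma}v\in C_0(\Gamma)$.

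I would then identify this with the map dual to $D_\pi$. As $\pi\colon C_\bullet\to\K$ sends each vertex to $1$ and each edge to $0$, its derivation extension $D_\pi$ on $S^\bullet(C_\bullet[-n+m+1])$ deletes one vertex-decoration in all possible ways; since $\GC^r_F=(\G^r_F)^*$ is contravariant in $F$, the transpose $\GC^r_{id\otimes D_\pi}$ instead inserts one vertex-decoration in all possible ways, i.e. multiplies the decoration by $\sum_v v$, exactly as above. The $\Det^{\otimes n}$ tensor factor is untouched on both sides (the map is $id\otimes D_\pi$), and a short bookkeeping check matches the shifts: growing a hair adds to the orientation set one external vertex of degree $m$ and one edge of degree $1-n$, raising the degree by $m+1-n=-n+m+1$, which is precisely the degree of $D_\pi$ recorded by the shift $[-n+m+1]$ in its target \eqref{eq:D_pi}, consistent with the source and target displayed in the proposition.

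The main obstacle is the sign comparison. On the hairy side the new hair prepends a new external vertex and a new oriented edge to the orientation set, with Koszul signs governed by the orientation relations (1)--(2); on the decorated side the relevant signs come from the functor $\Or$ (which acquires an extra factor $\K[1]$ for the new edge), from the Koszul signs in the symmetric-product derivation $D_\pi$, and from the transpose. I would therefore fix the isomorphism \eqref{eq:hgc_gc} with fully explicit orderings of vertices and edges and explicit edge orientations, evaluate both maps on a single decorated core graph, and verify that all of these signs combine into the same overall sign independently of the choices made. Naturality in $\Gamma$, i.e. compatibility with edge contractions, is then automatic, since both maps are induced by morphisms of coefficient systems and the term-by-term identification already pins them down.
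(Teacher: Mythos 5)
Your proposal is correct and follows essentially the same route as the paper: identify $\frac12[L,-]$ combinatorially with "grow one hair at each vertex" (the factor $2$ coming from the two hairs of $L$), and recognize this as the transpose of the derivation $D_\pi$ that cuts off a vertex decoration. The paper's own proof is in fact terser than yours — it does not carry out the sign bookkeeping you defer either — so your version is, if anything, more complete.
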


\begin{proof}
Indeed, the Lie bracket operation $[L,-]$ on graphs is combinatorially the operation of adding one additional hair at a vertex, i.e.,
\[
 [L, \Gamma] = 2\sum_{v\in V\Gamma} \Gamma \cup (\text{hair at $v$}).
\]
The coefficient 2 appears since each of two vertices of $L$ contributes. Dually, this operation is the sum of cuttings off hairs from one of the vertices, which in terms of maps of coefficient systems is exactly $D_\pi$.
\end{proof}

To finish the proof of the theorem we notice that the map $D_\pi$ of coefficients is compatible with the splitting~\eqref{eq:coef_split}.
\hfill \qed

\section{Proof of Theorem~\ref{thm:main_odd}}\label{sec:proofs}
We first define the filtration~\eqref{eq:main_filt} on the hairy graph-complex. We say that a hairy graph is of defect zero if it is obtained
by attaching only uni-trivalent trees and only to the edges of its core graph. Otherwise we say that a hairy graph has defect $k>0$ if
it is obtained from a graph of defect zero with the same core by contracting $k$ internal edges. For example, the graph below has defect~4.
\[
 \begin{tikzpicture}[level distance=6mm, sibling distance=3mm, coo/.style={inner sep=0,outer sep=0}, baseline=-.65ex]
  \node[int] (v) at (0,0) {};
  \node[int] (w1) at (0:1.5) {};
  \node[int] (w2) at (120:1.5) {};
  \node[int] (w3) at (-120:1.5) {};
  \draw (v) edge (w1) edge (w2) edge (w3) 
        (w2) to
         node[midway] (x1) {}
         node[near end](x2) {} 
         (w3) (w2) to
         node[midway] (x3) {} 
         node[near end] (x4) {} 
         node[near start] (x5) {} 
         (w1)
        (w1) to
        node[midway] (x6){} 
        (w3)
        ;
  \node[int] at (x1) {} child[grow=left] { node[int] (y1) {} child { node{} } child {node {}} };
  \node[int] at (x2) {}  child[grow=left] { node{} };
  \node[int] at (w1) {}  
    [grow=right]
      child { node{} } 
      child { node{} } ;
   \node[int] at (x3) {}
    [grow=up]
      child { node{} } 
      child { node[int]{} child {node{}}
              child {node{}} } 
       ;
       \draw (v) edge (y1);
 \end{tikzpicture}
 \]
We define the term $F_i^{h,r}$ of the filtration  to consist of the elements $x\in\HGC_{m,n}^{h,r}$ such that both $x$ and $dx$ 
are linear combinations of graphs of defect $\geq i$. This filtration is induced by the Postnikov filtration in the coefficient systems. 
Recall that for a cochain complex $(C,d)$, its $k$-th  Postnikov term $Po_k(C)$ is the subcomplex of $C$:
\[
(Po_k(C))_i=
\begin{cases}
C_i,& i\leq k;\\
d(C_i),& i=k+1;\\
0,& i>k+1.
\end{cases}
\]
This filtration is functorial and therefore induces a filtration $Po_\bullet(F)$ on any coefficient system $F\colon\Gr_r\to dg\Vect$ and thus on any graph-complex
$\G_F$. On the dual complex $\GC^r_F=G_F^*$ one considers the dual \lq\lq{}orthogonal\rq\rq{} filtration.
One has that $Po_k(C)/Po_{k-1}(C)$ is a two term complex whose cohomology is concentrated in degree $k$ and is exactly $H^k(C)$.
Moreover, one has a functorial in $C$ quasi-isomorphism
\[
H^k(C)  \hookrightarrow Po_k(C)/Po_{k-1}(C).
\]
This implies that the spectral sequence associated with the induced filtation in the graph-complex $\GC^r_F$ has as its first term 
$H(GC_{H(F)})$. Applying this general construction to $\HGC_{m,n}^{h,r}$ described as \eqref{equ:HGCdecoee}-\eqref{equ:HGCdecoeo} and knowing the fact that the homology of $\HPC^h_{A_{n-m-2}}$ is concentrated
only in two degrees, see Subsubsection~\ref{sss332}, we get the statements~\eqref{equ:thm2_1} and \eqref{equ:thm2_2} of the Theorem.

To see that $d_1$ is trivial in case $h=1$, we notice that up to a shift of degree the complex $\HGC_{m,n}^{1,r}$ depends only on the parity of $n$. Thus we get the same splitting as in the case of  $n-m$ even, which implies the collapse of the spectral sequence at $E_1$. 

In case $h=2$, the spectral sequence obviously abuts at $E_1$ as the latter one has only one non-trivial column.

Next consider the claim of Theorem \ref{thm:main_odd} that the Lie bracket with the tripod graph $T$ \eqref{equ:tripodgraph} sends $F_i^{h,r}$ to $F_{i+1}^{h+2,r}$.
%
  The operation of taking the Lie bracket $[T,-]$ is combinatorially the addition of a tripod to each vertex in turn, minus the attachment of a tripod at hairs i.e.,
\[
 [T, \Gamma] = 3\sum_{v\in V\Gamma} \Gamma \cup (\text{tripod at $v$})- 
 \sum_{h\in H\Gamma} \Gamma \cup (\text{tripod at $h$}).
\]
Pictorially, these two operations read
\begin{align}\label{eq:pict_bracket_T}
 \begin{tikzpicture}[baseline=-.65ex, scale = .7]
  \node[int] (v) at (0,.3) {};
  \node at (0,1) {$\cdots$};
  \draw (v)  edge +(0.33,.5) edge +(-0.33,.5) edge +(-0.5,.5) edge +(0.5,.5);
 \end{tikzpicture}
 &\mapsto 
 \begin{tikzpicture}[baseline=-.65ex, scale = .7]
  \node[int] (v) at (0,.3) {};
  \node[int] (w) at (0,-.3) {};
  \node at (0,1) {$\cdots$};
  \draw (v) edge +(0,-.7) edge +(0.33,.5) edge +(-0.33,.5) edge +(-0.5,.5) edge +(0.5,.5) edge (w)
        (w) edge +(-0.5,-.7) edge +(0.5,-.7);
 \end{tikzpicture}
 &
  \begin{tikzpicture}[baseline=-.65ex, scale = .7]
  \node[int] (v) at (0,.3) {};
  \node at (0,1) {$\cdots$};
  \draw (v) edge +(0,-.7) edge +(0.33,.5) edge +(-0.33,.5) edge +(-0.5,.5) edge +(0.5,.5);
 \end{tikzpicture}
 &\mapsto 
 \begin{tikzpicture}[baseline=-.65ex, scale = .7]
  \node[int] (v) at (0,.3) {};
  \node[int] (w) at (0,-.3) {};
  \node at (0,1) {$\cdots$};
  \draw (v) edge +(0,-.7) edge +(0.33,.5) edge +(-0.33,.5) edge +(-0.5,.5) edge +(0.5,.5) edge (w)
        (w) edge +(-0.5,-.7) edge +(0,-.7) edge +(0.5,-.7);
 \end{tikzpicture}
\end{align}
cf. also the graphical description of the Lie bracket \eqref{equ:grbracket}.  This operation increases the defect by~1 (and the number of hairs by~2). Moreover, since $dT=0$, one had $d[T,x]=[T,dx]$. Thus if $dx$ is a linear combinations of graphs of defect $i$, then $d[T,x]=[T,dx]$ is the sum of graphs of defect $i+1$,
which finishes the proof that $[T,-]$ sends $F_i^{h,r}$ to $F_{i+1}^{h+2,r}$. 

Similarly to $[L,-]$ (see Proposition~\ref{P:L}), the map $[T,-]$ can also be described in terms of maps of coefficient systems. It is easier to do so it for the dual coefficient systems. Let $C_{n-m-2}$, respectively $C\rq_{n-m-2}$ be the coalgebra dual to $A_{n-m-2}$, respectively $A\rq_{n-m-2}$. We consider the dual coefficient systems
\begin{gather*}
\HPC^h_{C_{n-m-2}}\colon \Gr_r^{op}\to dg\Vect,\\
\HPC^h_{C\rq_{n-m-2}}\colon \Gr_r^{op}\to dg\Vect
\end{gather*}
defined as objectwise dual of $\HPC^h_{A_{n-m-2}}$, respectively $\HPC^h_{A\rq_{n-m-2}}$.
The operation $[T,-]$ can be extended to a map of coefficient systems
\beq{eq:bracket_T}
[T,-]\colon \HPC^h_{C_{n-m-2}} \Rightarrow \HPC^{h+2}_{C_{n-m-2}}
\eeq
by the same pictorial formulas~\eqref{eq:pict_bracket_T}. We will consider the restriction of~\eqref{eq:bracket_T} on the quasi-isomorphic subsystem
$\HPC^h_{C\rq{}_{n-m-2}} \subset \HPC^h_{C_{n-m-2}} $. We have to show that the induced map of the defect zero homology (coefficient system) of $\HPC^h_{C_{n-m-2}} $ to the defect
one homology (coefficient system) of $\HPC^{h+2}_{C_{n-m-2}}$ is an isomorphism. Since all the restriction maps in the coefficient systems are quasi-isomorphisms, it is
enough to check this statement only for one graph that we choose to be the simplest one, i.e. $W_r$. 

The complex $\HPC^{W_r,h}(C\rq_{n-m-2})$ is dual to~\eqref{eq:derham}. It has length two and can also be described as the de Rham map
 \begin{multline}\label{eq:derham_dual}
  0\to \bigoplus_{l+2l'=h-1} S^l(H^1(W_r)[n-m-2]) \otimes S^{l'}(H^1(W_r)[2n-2m-4])  
  \xrightarrow{\rm d_{dR}} \\
  \bigoplus_{l+2l'=h} H^0(W_r)[n-m-1] \otimes S^l(H^1(W_r)[n-m-2]) \otimes S^{l'}(H^1(W_r)[2n-2m-4]) \to 0.
 \end{multline}
Here we denote the basis of $H^1(W_r)[n-m-2]$ by variables $y_j$  and a basis of $H^1(W_r)[2n-2m-4]$ by variables $\theta_j$. We interpret the spaces in the complex above as polynomials in $y_j$ and $\theta_j$, then the dual ``de Rham'' differential $\rm d_{dR}$ is given by the formula
 \beq{equ:dRDef2}
  \rm d_{dR}f(y_1,\dots,y_n,\theta_1,\dots,\theta_n)= \sum_{j=1}^n \theta_j \frac\partial {\partial y_j} f(y_1,\dots,y_n,\theta_1,\dots,\theta_n).
 \eeq
 The second term $X_2$ of this complex is spanned by graphs $W_r$ that have only edges decorated by hairs (graphs of defect zero), the first term $X_1$ being spanned by 
 similar graphs with the only vertex of $W_r$ decorated by a hair (graphs of defect one). As before a monomial $\theta_j^k$ should be understood as the $j$-th circle
 having $2k$ hairs; and a monomial $y_j\theta_j^k$ should be understood as the $j$-th circle having $2k+1$ hairs. 
 
Define a map $\Psi\colon X_2\to \HPC^{W_r,h+2}(C_{n-m-2})$
 by sending each graph to the sum obtained by attaching two hairs at one of the internal vertices, 
i.e. either in the only vertex $v$ of $W_r$ or in the base of one of the hairs. It is easy to see that $([T,-]|_{X_2}-d\circ\Psi)$  is described as attachment of a hair in $v$,
then taking the differential in $\HPC^{W_r,h+1}(C\rq{}_{n-m-2})$ and then again attachment of a hair 
in $v$. Thus this map abuts in  $\HPC^{W_r,h+2}(C\rq_{n-m-2})
\subset \HPC^{W_r,h+2}(C_{n-m-2})$. This map is essentially $\rm d_{dR}$ that provides an isomorphism in homology: between $\ker \rm d_{dR}$ and $\coker\, \rm d_{dR}$.
\hfill \qed

\begin{rem}[Hodge filtration on $\HP(A_d)$]
Note that for $d=n-m$ odd the coefficient system $\HP(A_d)$ originates from a representation of $\Out(F_r)$ that does not factor through $\GL(r,\Z)$ in general. 
However, we recall from \cite{TW2} or Subsection \ref{sec:HPofAd} that there is a filtration (called Hodge filtration therein) on the Hochschild-Pirashvili homology $\HP(A_d)$, such that the associated graded does factor through the $\GL(r,\Z)$-action on $H_1$. It follows that all the hairy graph cohomology may be considered as a subquotient of the decorated graph cohomology induced by representations of $\GL(n,\Z)$ (on $H_1$).
It might thus be of interest to study these decorated version in more detail.
\end{rem}

\subsection{Consequences and discussion}
Theorems \ref{thm:main_even} and Theorem \ref{thm:main_odd} have interesting consequences for the structure of the hairy graph cohomology.
First, a copy of the non-hairy graph cohomology embeds into $H(\HGC_{m,n})$ in both cases $n-m$ even and $n-m$ odd.
Secondly, the remaining classes ``come in pairs''. In the case $n-m$ even the pairing is realized directly by the bracket with the line graph $L$. This is illustrated on a computer generated table of $H(\HGC_{m,n})$ in Figure \ref{fig:graphtable_even}.

In case $n-m$ odd, we only know for sure that there is a spectral sequence at some of whose page the classes cancel, cf. \cite{TW}.
However, looking at the cancellation pattern in the computer generated table of $H(\HGC_{m,n})$ in Figure \ref{fig:graphtable_odd}, we see that in small degrees the cancellation always seems to happen on the $E_2$ page, and is given by the bracket with the tripod graph $T$.


\begin{figure}
\begin{align*}
&  \begin{tikzpicture}
\matrix (mag) [matrix of math nodes,ampersand replacement=\&]
{
{\phantom{2} } \& 1 \& 2 \& 3 \& 4 \& 5 \& 6 \& 7 \& 8 \&  \&  \&  \&  \&  \&   \\
 9 \& 1_{16} \& 1_{16} \&  \&  \&  \&  \&  \&  \&   \&  \& \& \& \& \\
 8 \&       \& 1_{15} \&  \&  \&  \&  \&  \&  \&   \&  \& \& \& \& \\
 7 \& 1_{12} \& 1_{12} \&  \&  \&  \&  \&  \&  \&   \&  \& \& \& \& \\
 6 \&       \& 1_{11} \&  \&  \&  \&  \&  \&  \&  \&  \&  \&  \& \&  \\
 5 \& 1_8    \&  \&  \&   1_{11} \& \&  \&  \&  \&  \&  \&  \&  \& \&  \\
 4 \&       \&  \&  \& 1_8 \&  \&  \&  \&  \&  \&  \&  \&  \& \&  \\
 3 \& 1_4    \&  \&  \& 1_7 \&  \&  \&  \&  \&  \&  \&  \&  \&  \&\\
 2 \&       \&  \&  \&  \&  \&  \&  \&  \&  \&  \&  \&   \& \& \\
 1 \&  1_0     \&  \& 1_1 \&  \& 1_1 \& 1_4 \& 1_1 \& 1_1,1_4 \&  \&  \&   \& \& \& \\
};
\draw (mag-1-1.south) -- (mag-1-9.south);
\draw (mag-1-1.east) -- (mag-10-1.east);
\draw[-, purple!30, thick] (mag-3-3.center) edge (mag-2-3.center);
\draw[-, purple!30, thick] (mag-5-3.center) edge (mag-4-3.center);
\draw[-, purple!30, thick] (mag-8-5.center) edge (mag-7-5.center);
\end{tikzpicture}
&
\begin{tikzpicture}
\matrix (mag) [matrix of math nodes,ampersand replacement=\&]
{
{\phantom{2} } \& 1 \& 2 \& 3 \& 4 \& 5 \& 6 \& 7 \& 8 \&  \&  \&  \&  \&  \&   \\
 9 \&       \& 2_{14} \&  \&  \&  \&  \&  \&  \&   \&  \& \& \& \& \\
 8 \&  1_{13}  \&  2_{13} \&  \&  \&  \&  \&  \&  \&   \&  \& \& \& \& \\
 7 \&       \& 2_{10} \& 5_{10} \& 1_8 \&  \&  \&  \&  \&   \&  \& \& \& \& \\
 6 \&  1_9 \& 2_9  \& 3_9,1_7 \& 4_7 \&  \&  \&  \&  \&  \&  \&  \&  \& \&  \\
 5 \&       \& 1_6 \& 3_6 \& 6_6  \& 1_4 \&  \&  \&  \&  \&  \&  \&  \& \&  \\
 4 \&  1_5   \& 1_5 \& 2_5 \& 3_5,1_3 \& 4_{5},3_3 \&  \&  \&  \&  \&  \&  \&  \& \&  \\
 3 \&      \&  1_2 \& 1_2 \& 3_2  \& 4_2 \&  \&  \&  \&  \&  \&  \&  \&   \&\\
 2 \&  1_1     \&   1_{1} \& 1_{1} \& 2_{1} \& 2_{1} \&  \&  \&   \&  \& \&  \&   \& \& \\
 1 \&       \& 1_{-2} \& 1_{-2} \& 1_{-2} \& 2_{-2} \& 2_{-2},1_{-5} \& 1_{-5} \&   \&  \&  \&   \& \& \& \\
};
 \draw (mag-1-1.south) -- (mag-1-9.south);
 \draw (mag-1-1.east) -- (mag-10-1.east);
 \draw[-, purple!30, thick] (mag-2-3.center) edge (mag-3-3.center);
 \draw[-, purple!30, thick] (mag-4-3.center) edge (mag-5-3.center);
 \draw[-, purple!30, thick] (mag-6-3.center) edge (mag-7-3.center);
 \draw[-, purple!30, thick] (mag-6-4.center) edge (mag-7-4.center);
  \draw[-, purple!30, thick] (mag-5-4.center) edge (mag-6-4.center);
  \draw[-, purple!30, thick] (mag-4-4.center) edge (mag-5-4.center);
  \draw[-, purple!30, thick] (mag-9-5.center) edge (mag-8-5.center);
  \draw[-, purple!30, thick] (mag-9-6.center) edge (mag-8-6.center);
   \draw[-, purple!30, thick] (mag-9-3.center) edge (mag-8-3.center);
    \draw[-, purple!30, thick] (mag-9-4.center) edge (mag-8-4.center);
\end{tikzpicture}
\end{align*}
 \caption{\label{fig:graphtable_even} Computer generated tables of $\HGC_{m,n}$ in even codimension $n-m$ (left: $n=m=2$, right: $n=m=3$) taken from \cite{KWZ2}, with the cancellations induced by the bracket with the line graph marked. 
 The rows indicate the number of hairs ($\uparrow$), the columns the loop order ($\rightarrow$). A table entry $1_3$ means that there the degree 3 subspace is one-dimensional. }
\end{figure}

\begin{figure}
\begin{align*}
&
\begin{tikzpicture}
\matrix (mag) [matrix of math nodes,ampersand replacement=\&]
{
{\phantom{2} } \& 1 \& 2 \& 3 \& 4 \& 5 \& 6 \& 7 \& 8 \&  \&  \&  \&  \&  \&   \\
 9 \&  1_8 \&  \&  \&  \&  \&  \&  \&  \&   \&  \& \& \& \& \\
 8 \&       \& 1_7 \& 1_7 \&  \&  \&  \&  \&  \&   \&  \& \& \& \& \\
 7 \&       \&  \& 1_8  \&  1_6 \&  \&  \&  \&  \&   \&  \& \& \& \& \\
 6 \&       \& 1_6 \& 1_6 \& 1_6,1_7 \&  \&  \&  \&  \&  \&  \&  \&  \& \&  \\
 5 \&  1_4 \&  \& 1_5 \&  1_5,1_7  \& 2_5 \&  \&  \&  \&  \&  \&  \&  \& \&  \\
 4 \&       \& 1_3 \&  \&  1_5,1_6 \& 1_6 \&  \&  \&  \&  \&  \&  \&  \& \&  \\
 3 \&       \&  \& 1_4 \&  \& 1_4 \&  \&  \&  \&  \&  \&  \&  \&  \&\\
 2 \&       \& 1_2 \&  \&  \& 1_5  \&  \&  \&  \&  \&  \&  \&   \& \& \\
 1 \&  1_0     \&  \& 1_1 \&  \& 1_1 \& 1_4 \& 1_1 \& 1_1,1_4 \&  \&  \&   \& \& \& \\
};
\draw (mag-1-1.south) -- (mag-1-9.south);
\draw (mag-1-1.east) -- (mag-10-1.east);
\draw[-, purple!30, thick] (mag-3-3.center) edge (mag-5-3.center);
\draw[-, purple!30, thick] (mag-7-3.center) edge (mag-9-3.center);
\draw[-, purple!30, thick] (mag-6-4.center) edge (mag-8-4.center);
\draw[-, purple!30, thick] (mag-5-5.center) edge (mag-7-5.center);
\draw[-, purple!30, thick] (mag-3-4.center) edge (mag-5-4.center);
\draw[-, purple!30, thick] (mag-7-6.center) edge (mag-9-6.center);
\draw[-, purple!30, thick] (mag-6-6.center) edge (mag-8-6.center);
\end{tikzpicture}
&
\begin{tikzpicture}
\matrix (mag) [matrix of math nodes,ampersand replacement=\&]
{
{\phantom{2} } \& 1 \& 2 \& 3 \& 4 \& 5 \& 6 \& 7 \& 8 \&  \&  \&  \&  \&  \&   \\
 9 \&       \& 1_6 \&  \&  \&  \&  \&  \&  \&   \&  \& \& \& \& \\
 8 \&       \&  \&  \&  \&  \&  \&  \&  \&   \&  \& \& \& \& \\
 7 \& 1_{5} \& 1_5 \& 2_5 \&  \&  \&  \&  \&  \&   \&  \& \& \& \& \\
 6 \&       \&  \& 1_2 \&  \&  \&  \&  \&  \&  \&  \&  \&  \& \&  \\
 5 \&       \& 1_2 \& 2_2 \& 3_2  \& 5_2 \&  \&  \&  \&  \&  \&  \&  \& \&  \\
 4 \&       \&  \& 1_1 \& 1_1 \& 1_{-1} \&  \&  \&  \&  \&  \&  \&  \& \&  \\
 3 \& 1_1   \& 1_1 \& 2_1 \& 2_1 \& 3_1 \&  \&  \&  \&  \&  \&  \&  \&  \&\\
 2 \&       \&  \&  \&  \& 1_{-2} \& 1_{-2} \&  \&  \&  \&  \&  \&   \& \& \\
 1 \&       \& 1_{-2} \& 1_{-2} \& 1_{-2} \& 2_{-2} \& 2_{-2},1_{-5} \& 1_{-5} \&   \&  \&  \&   \& \& \& \\
};
 \draw (mag-1-1.south) -- (mag-1-9.south);
 \draw (mag-1-1.east) -- (mag-10-1.east);
 \draw[-, purple!30, thick] (mag-2-3.center) edge (mag-4-3.center);
 \draw[-, purple!30, thick] (mag-6-3.center) edge (mag-8-3.center);
 \draw[-, purple!30, thick] (mag-5-4.center) edge (mag-7-4.center);
  \draw[-, purple!30, thick] (mag-6-4.center) edge (mag-8-4.center);
  \draw[-, purple!30, thick] (mag-6-5.center) edge (mag-8-5.center);
  \draw[-, purple!30, thick] (mag-6-6.center) edge (mag-8-6.center);
  \draw[-, purple!30, thick] (mag-7-6.center) edge (mag-9-6.center);
\end{tikzpicture}
 \end{align*}
 \caption{\label{fig:graphtable_odd} Computer generated tables of $\HGC_{m,n}$ in odd codimension (left: $n=2$, $m=1$, right: $n=3$, $m=2$) taken from \cite{KWZ2}, with the cancellations induced by the bracket with the tripod graph marked. }
\end{figure}

\subsection{Remark: String links and a ``colored'' variant}
The construction of the spectral sequences above can be extended to more general ``hairy'' graph complexes considered in the literature.
For example, it is shown in \cite{ST} that under suitable hypothesis the rational homotopy of the space of long embeddings (modulo immersions) of $N$ ``strings'' of dimensions $m_1,\dots,m_N$ in $\R^n$ can be expressed through the graph cohomology of a hairy graph complex $\HGC_{m_1,\dots,m_N;n}$, generalizing the complex $\HGC_{m,n}$ arising in the case $N=1$. Similar graph-complexes were also considered in~\cite{CKV1,CKV2}.\footnote{In these works the construction is
more general  on internal vertices, allowing any cyclic operad as input (commutative operad in our case), but slightly more restrictive on the hair vertices, allowing only even number of colors of the same degrees -- which are basis elements of a symplectic vector space.} 
The complex $\HGC_{m_1,\dots,m_N;n}$ differs in so far that hairs are $N$-colored, with the $j$-colored hairs carrying cohomological degree $m_j$. The complex $\HGC_{m_1,\dots,m_N;n}$ splits according to the loop order $r$ and the number of hairs in each color $\underline{k}=(k_1,\dots,k_N)$. By similar considerations as above we then find a spectral sequence (similarly associated to the filtration by defect) relating the decorated to the (colored) hairy graph cohomology
\[
 H(\GC^r_{\HP^{\underline k}_{A_{\underline{d}}}\otimes \Det^{\otimes n}} )[rn-n] \Rightarrow \HGC_{m_1,\dots,m_N;n}^{\underline{k},r},
\]
where $d=(n-m_1-2,\dots, n-m_N-2)$ and $A_{\underline{d}}$ is as in Subsection \ref{sec:mainexamples}.
As above, the commutative algebra $A_{\underline{d}}$ is formal and may be replaced by its homology $A_{\underline{d}}'$. The first term of the obtained spectral 
sequence is
$H(\GC^r_{\HP^{\underline k}_{A_{\underline{d}}}\otimes \Det^{\otimes n}} )[rn{-}n]$ and is also concentrated on two columns.
Unfortunately, in this case we cannot generally compute $\HP(A_{\underline{d}})$, in contrast to the $N=1$ situation.

\section{Hairy graph-homology in the loop order $r=2$}\label{s:r2}
For the loop order $r=2$, the homology of the hairy graph-complexes $\HGC_{m,n}$ were computed in~\cite{CCTW}. Surprisingly the computations were much harder in the case of even codimension, to which our paper is an explanation. In fact our theorems~\ref{thm:main_even} and~\ref{thm:main_odd} can can substantially simplify 
those computations especially in the difficult case of odd codimension reducing it to the even case. First and a practical remark is that in the graph-complex $\GC_F^r$
for any homotopy coefficient system $F$ we can ignore the graphs with cut vertices. The proof is completely analogous to the case of constant coefficients~\cite[Theorem~1.1]{CCV} (see also~\cite[Theorem~3.1]{CCTW} where this is proved for the hairy graph-complexes). In the case of  loop order~2 there is
only one graph without cut vertices, which we denote by $\theta$:
\[
\begin{picture}(60,40)
\put(4,20){\circle*{3}}
\put(56,20){\circle*{3}}
\qbezier(4,20)(30,45)(56,20)
\qbezier(4,20)(30,-5)(56,20)
\put(4,20){\line(1,0){52}}
\end{picture}.
\]
In other words, we get
\[
\GC_F^2\simeq (F^*\otimes\Or)^{G_\theta},
\]
where $G_\theta =S_3\times S_2$ is the group of symmetries of $\theta$. The sign factor $\Or$ is responsible for the permutation of edges, thus it is the sign representation of the factor $S_3$. One can also easily see that in this case $\Or=\Det$. In particular if $V$ is a finite dimensional $\Out(F_2)$ representation
concentrated in degree zero, one has
\beq{eq:H_V_r2}
H^i(\GC_V^2)=
\begin{cases}
0,& i\neq 3;\\
(V^*\otimes \Det)^{G_\theta},& i=3,
\end{cases}
\eeq
The cohomology is concentrated in degree 3, since $\theta$ has 3 edges and corresponds to a 3-dimensional orbicell of $\OS_2$. From equation~\eqref{eq:H_V_r2} one immediately gets
\beq{eq:S_2k-1}
H^3(GC_{S^{2k-1}H_1}^2)=H^3(GC_{\Det\otimes S^{2k-1}H_1}^2)=0.
\eeq
Indeed, the symmetry of $\theta$ that preserves edges and flips the vertices acts as $-1$ on $H_1$ and thus as $(-1)^{2k-1}=-1$ on $S^{2k-1}H_1$, while $\Det$ produces a positive sign.  This in particular means that the splitting $\HGC_{m,n}^{2,h}=\HGC_{m,n}^{2,h,I}\oplus \HGC_{m,n}^{2,h,II}$ of Theorem~\ref{thm:main_even} 
is trivial in homology in the sense that one of the two terms of the splitting is always acyclic. Computations made in~\cite{CCTW}, specifically its~\cite[Theorems~6.1 and~6.2]{CCTW},
imply
\beq{eq:S_2k}
H^3(\GC_{S^{2k}H_1}^2)=\K^{\lfloor{\frac k3}\rfloor}; \qquad H^3(\GC_{\Det\otimes S^{2k}H_1}^2)=\K^{\lfloor{\frac k3}\rfloor+1}.
\eeq

A similar situation takes place in odd codimension as well. The spectral
sequence of Theorem~\ref{thm:main_odd} always abuts at the first term for $r=2$ as this term $E_1$ always has only one non-trivial column. Indeed, 
$\Out(F_2)=\GL(2,\Z)$ and thus representations $B_2^h$ obviously factor through $\GL(2,\Z)$ (contrary to the case $r\geq 3$). Moreover, easy computations
show that 
\beq{eq:Bh_2}
B_2^{2k-1}\simeq S^k H_1\quad  \text{ and } \quad B_2^{2k}\simeq \Det\otimes S^{k-1} H_1.
\eeq
This together with~\eqref{eq:S_2k-1} implies that one of the two columns in $E_1$ is always zero. Theorem~\ref{thm:main_odd} together 
with~(\ref{eq:H_V_r2}-\ref{eq:Bh_2}) recover the computations \cite[Theorems~6.3 and 6.4]{CCTW} of the hairy graph-homology for $r=2$ in the odd case.

\section{Application to the deformation theory of the little discs operads}\label{s:operads}
As we mentioned in the introduction, the hairy graph-complexes $\HGC_{m,n}$ appear naturally in the relative deformation theory of the little discs operads. In this section we briefly recall how exactly they appear and also we explain how our main results Theorems~\ref{thm:main_even} and~\ref{thm:main_odd} are related and actually give a simpler proof to some earlier results of the authors obtained in~\cite{grt,TW} about the relative deformations of little discs operads in codimensions~0 and~1. 

In this section we use freely the language of operads. A good introduction into the subject can be found in the textbook \cite{lodayval}, whose conventions we mostly follow. 

We use the notation $\op P\{k\}$ for the $k$-fold operadic desuspension.
The operads governing commutative, associative and Lie algebras are denoted by $\Com$, $\Ass$ and $\Lie$ respectively.
The $n$-Poisson operad $\Poiss_n$ governs (non-unital) commutative algebras with an additional Lie bracket of degree $1-n$, which is a derivation with respect to the commutative product.
It contains as a sub-operads $\Com$ and the desuspended Lie operad $\Lie_n:= \Lie\{n-1\}$.

For a coaugmented cooperad $\op C$ we denote by $\Omega(C)$ its cobar construction, cf. \cite[section 6.5]{lodayval}.
If $\op P$ is a quadratic Koszul operad, we denote its Koszul dual cooperad by $\op P^\vee$.
In this case we often use the notation $\mathsf{ho}\op{P}$ for the cobar construction of the Koszul dual, e.g.,
\begin{align*}
 \hoAss &= \Omega(\Ass^\vee) & \hoPois_n &= \Omega(\Pois_n^\vee) & \hoLie_n=\Omega(\Lie_n^\vee) & &\text{etc...}
\end{align*}
The quadratic operads considered above are well-known to be Koszul. One has $\Ass^\vee =\Ass^*\{1\}$ (the cooperad dual to $\Ass$ operadically desuspended once); $\Com^\vee=\Lie^*\{1\}$; $\Lie_n^\vee=\Com^*\{n\}$; $\Pois_n^\vee=\Pois_n^*\{n\}$.

For any morphism of dg operads ${\mathcal P}\to {\mathcal Q}$, one can define the deformation complex $\Def({\mathcal P}\to {\mathcal Q})$ which is the complex
of derivations of the composite map $\hat {\mathcal P}\to {\mathcal P}\to {\mathcal Q}$ (where $\hat {\mathcal P}\to {\mathcal P}$ is a cofibrant replacement of ${\mathcal P}$) shifted in degree by one, so that it is endowed with a natural $L_\infty$ structure~\cite{KS,lodayval}. In case $\hat {\mathcal P}$ is a cobar construction 
of a dg cooperad, the induced $L_\infty$ structure is  a dg Lie algebra structure.

By $E_n$ we denote the operad of singular chains of the little discs operads. Its homology operad is the associative operad $\Ass$ if $n=1$, and the operad $\Poiss_n$
if $n>1$. 
It has been shown in~\cite{FW,TW} that the natural inclusion $E_m\to E_n$ is rationally a formal map of operads if and only if $n-m\neq 1$.\footnote{See also~\cite{LV}, where this result has been established earlier for a weaker range of dimensions.}  Thus in case $n-m\neq 1$, the deformation complex $\Def(E_m\to E_n)$ is equivalent to the deformation complex of the induced map of operads in homology. In particular, in case $n>m+1>2$, 
\[
\Def(E_m\to E_n)\simeq \Def(\Pois_m\xrightarrow{*}\Pois_n),
\]
where $ \Pois_m\xrightarrow{*}\Pois_n$ denotes the composite map $\Pois_m\to\Com\to\Pois_n$ of the obvious projection followed by an inclusion.

Kontsevich's operad $\Graphs_n$ (see \cite{K2}) is an operad whose space of $r$-ary operations $\Graphs_n(r)$ consists of linear combinations of isomorphism classes of graphs with $r$ numbered ``external'' vertices and an arbitrary number of unnumbered ``internal'' vertices.
These graphs are required to satisfy the additional conditions:
\begin{itemize}
 \item All internal vertices have at least valence $3$.
 \item Each connected component contains at least one external vertex.
\end{itemize}
The following picture shows an examples of such an admissible graph.
\[
\begin{tikzpicture}
 \node[ext] (v1) at (0,0) {1};
 \node[ext] (v2) at (1,0) {2};
 \node[ext] (v3) at (2,0) {3};
 \node[ext] (v4) at (3,0) {4};
 \node[int] (w1) at (1,1) {};
 \node[int] (w2) at (1,2) {};
 \node[int] (w3) at (2,1) {};
 \node[int] (w4) at (2,2) {};
 \draw (v1) edge (v2) edge (w1) 
       (w2) edge (w1) edge (w3) edge (w4)
       (w3) edge (v3) edge (v2) edge (w1) edge (w4)
       (w1) edge (w4);
\end{tikzpicture}
\]
The cohomological degree of a graph $\Gamma$ is the number 
\[
 n (\#(\text{internal vertices})-1) - (n-1) \#(\text{edges}).
\]
For more details, signs, and the definition of the differential and the operad structure we refer the reader to \cite{KS,LV,TW}.

The key result is that the operad $\Graphs_n$ forms a model for the homology operad of the little $n$-disks operad for $n\geq 2$.
\begin{thm}[Kontsevich \cite{K2}, Lambrechts-Voli\'c \cite{LV}]
 There is a quasi-isomorphism of operads $\Pois_n\to \Graphs_n$ for all $n\in \mathbb{Z}$.
\end{thm}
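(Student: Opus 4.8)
The plan is to write down the map on the operadic generators of $\Pois_n$, verify it respects the Poisson relations, and then establish the quasi-isomorphism by Kontsevich's configuration space integrals. Recall that $\Pois_n$ is generated by a commutative product $\mu$ of degree $0$ and a Lie bracket $\beta$ of degree $1-n$. I would send $\mu$ to the graph on two external vertices carrying no edge, and $\beta$ to the graph on two external vertices joined by a single edge; a direct count gives $\deg\beta-\deg\mu=1-n$, as required. Commutativity and associativity of the image of $\mu$ are immediate, and the graded antisymmetry of $\beta$ is the sign rule for reorienting the single edge. The one relation not visible on the nose is Jacobi: the operadic Jacobiator of the one-edge graph is the alternating sum of the three ``cherry'' graphs on three external vertices, and this sum is precisely the differential (edge contraction at the central vertex) of the tripod graph. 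Hence the Jacobi relation holds in $H(\Graphs_n)$, and---passing through the cobar resolution $\hoPois_n=\Omega(\Pois_n^\vee)$ if one wants a strict representative---this data assembles into a morphism $\Pois_n\to\Graphs_n$ of dg operads.

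It then remains to prove this morphism is a quasi-isomorphism, i.e. that $H(\Graphs_n)\cong\Pois_n$ arity by arity, with the isomorphism realized by the above generators. The geometric route, carried out rigorously by Lambrechts and Voli\'c, is to build a second morphism of operads
\[
\omega\colon \Graphs_n \longrightarrow \Omega^*_{\mathrm{PA}}(\FM_n),
\]
from $\Graphs_n$ into the semi-algebraic (PA) de Rham complex of the Fulton--MacPherson compactification $\FM_n$ of the configuration spaces in $\R^n$. To a graph one assigns the fiberwise integral, over its internal vertices, of the product over all edges of a fixed rotation-invariant propagator (the pullback of the volume form of $S^{n-1}$ under the normalized difference map). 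Compatibility of $\omega$ with the edge-contraction differential is Stokes' theorem on the fibers, once one checks that the only surviving codimension-one boundary faces are those collapsing a single edge, while the ``hidden'' and ``anomalous'' faces contribute zero by degree and symmetry (vanishing-lemma) arguments.

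Finally one identifies the target. Since $\FM_n$ is a model for the little $n$-disks operad, $H(\FM_n(r))\cong H(\mathrm{Conf}_r(\R^n))$, which by Arnold ($n=2$) and F.~Cohen (general $n$) is the Poisson cooperad: it is generated by classes $\omega_{ij}$ of degree $n-1$ modulo $\omega_{ij}^2=0$ and the Arnold relations $\omega_{ij}\omega_{jk}+\omega_{jk}\omega_{ki}+\omega_{ki}\omega_{ij}=0$. Under the dictionary ``edge $ij$'' $\mapsto\omega_{ij}$, the map $\omega$ sends the single-edge graph to the generator $\omega_{ij}$ and the tripod to a primitive of the Arnold combination, so on cohomology $\omega$ reproduces exactly this presentation of $H(\mathrm{Conf}_r(\R^n))$. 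Thus $H(\Graphs_n)\cong\Pois_n$, and chasing the resulting triangles shows that the original map $\Pois_n\to\Graphs_n$ induces this isomorphism.

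I expect the main obstacle to be the quasi-isomorphism of $\omega$, and within it the vanishing lemmas for the boundary integrals. Purely combinatorially one may instead filter $\Graphs_n$ by the number of internal vertices and try to show that the associated graded---recording how internal vertices are split off and reabsorbed---is acyclic away from the $\Gra_n$-part, so that in homology only edge-configurations modulo the tripod-induced Arnold relations survive; but making this collapse precise is delicate, and it is exactly here that the configuration-space-integral argument, controlled for full rigor by the formalism of PA forms, does the real work.
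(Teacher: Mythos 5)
The paper does not prove this statement: it is imported from Kontsevich and Lambrechts--Voli\'c, so there is no internal proof to compare against, and your sketch must be judged against the standard argument of those references. It follows the right strategy but contains one misidentification and leaves the actual substance of the theorem unproved. The misidentification: the Jacobi identity for the single-edge graph holds \emph{strictly} in the sub-operad $\Gra_n\subset\Graphs_n$ of graphs without internal vertices --- each operadic insertion produces a sum of two reconnected two-edge graphs, and these cancel in pairs in the Jacobiator --- so $\Pois_n\to\Graphs_n$ is a genuine morphism of dg operads and no detour through $\hoPois_n$ is needed. What the differential of the tripod produces is not the Jacobiator but the Arnold combination of the three two-edge graphs, i.e.\ the relation that must be imposed on $H(\Gra_n)$ to cut it down to $\Pois_n$; it governs the computation of $H(\Graphs_n)$, not the well-definedness of the map.

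More seriously, the entire content of the theorem is the computation $H(\Graphs_n)\cong\Pois_n$ --- that graphs with internal vertices contribute nothing to cohomology beyond enforcing the Arnold relations --- and this is precisely the step you defer as ``delicate.'' The geometric map $\omega$ does not supply it: even granting Cohen's computation of $H(\mathrm{Conf}_r(\R^n))$ and the Stokes/vanishing arguments, the pairing of graphs with forms on $\FM_n$ only shows that $\Pois_n\to H(\Graphs_n)$ is injective (note also the variance issue: $\Omega_{\mathrm{PA}}(\FM_n)$ is contravariant in the operad structure, so $\omega$ is really a map out of the predual cooperad of diagrams, not out of the operad $\Graphs_n$). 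Surjectivity requires the combinatorial collapse --- a filtration by the number of internal vertices, or splitting off internally connected components --- which is in fact how Lambrechts--Voli\'c compute the cohomology of their diagram complexes before comparing with $\FM_n$. Finally, the theorem is asserted for all $n\in\Z$, including $n\le 0$ where there is no configuration space; one must either run the combinatorial computation directly or observe that, up to regrading and signs, $\Graphs_n$ depends only on the parity of $n$, so that the geometric argument for one even and one odd value suffices. As written, your proposal identifies the right landmarks but does not close the argument.
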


We may now consider the deformation complex 
\beq{equ:Defcomplex}
 \Def(\hoPois_m \xrightarrow{*} \Graphs_n),
\eeq
where the map to be deformed is the composition 
\[
 \hoPois_m \to \Pois_m \to \Com \to \Graphs_n.
\]
Concretely, as a graded vector space the above deformation complex is isomorphic to 
\[
 \Def(\hoPois_m \xrightarrow{*} \Graphs_n) \cong \prod_{r\geq 1} \Hom_{S_r}(\Pois_m^*\{m\}(r), \Graphs_n(r)).
\]

One defines the sub-complex 
\beq{equ:fHGCDef}
\fHGC_{m,n} \subset  \Def(\hoPois_m \xrightarrow{*} \Graphs_n)
\eeq
to be spanned by maps that satisfy the following two conditions:
\begin{itemize}
 \item The image is a series of graphs all of whose external vertices are univalent.
 \item The map factors through $\Pois_m^*\{m\}\to \Com^*\{m\}$.
\end{itemize}
Elements of $\fHGC_{m,n}$ are naturally identified with series of (not necessarily connected) ``hairy'' graphs as depicted in \eqref{equ:HGCsample}.
The cohomological degree of such a graph is computed as the number 
\[
  n \#(\text{internal vertices}) - (n-1) \#(\text{edges}) + m (\#(\text{hairs})-1).
\]
We cite the following result from the literature.
\begin{thm}[\cite{Turchin2}, \cite{TW}]
 The inclusion \eqref{equ:fHGCDef} is a quasi-isomorphism.
\end{thm}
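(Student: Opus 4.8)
The plan is to realize the deformation complex \eqref{equ:Defcomplex} as an explicit decorated graph complex and then strip away the non-hairy contributions by two successive spectral-sequence arguments whose leading differentials are acyclic. First I would make the convolution description concrete. As recalled above,
\[
 \Def(\hoPois_m \xrightarrow{*} \Graphs_n) \cong \prod_{r\geq 1} \Hom_{S_r}(\Pois_m^\vee(r), \Graphs_n(r)),
\]
so an element is a family of $S_r$-equivariant maps $\phi_r\colon \Pois_m^\vee(r)\to \Graphs_n(r)$, i.e.\ a series of $\Graphs_n$-graphs whose $r$ numbered external vertices additionally carry a $\Pois_m^\vee$-decoration. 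The differential is $d=d_{\mathrm{int}}+[\mu,-]$, where $d_{\mathrm{int}}$ combines the internal (vertex-splitting) differential of $\Graphs_n$ with the cobar differential of $\Pois_m^\vee$, and $[\mu,-]$ is the convolution bracket with the Maurer--Cartan element $\mu$ encoding the map. Since the map factors as $\hoPois_m\to\Pois_m\to\Com\to\Graphs_n$, the element $\mu$ is concentrated on the binary cogenerators: it sends the cocommutative generator to the product graph (two univalent external vertices, no edge) and the co-bracket generator to zero. With this dictionary, $\fHGC_{m,n}$ is exactly the subspace of families $\phi_r$ that factor through the one-dimensional quotient $\Pois_m^\vee(r)\to\Com^*\{m\}(r)$ and whose image lies in graphs with all external vertices univalent.

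Second I would reduce the external decoration from $\Pois_m^\vee$ to $\Com^*\{m\}$. Using the $S$-module factorization of $\Pois_m^\vee$ into its cocommutative and co-Lie constituents (coming from $\Pois_m=\Com\circ\Lie_m$ together with the Koszul self-duality of the Poisson operad), I would filter by the co-Lie complexity of the decoration, that is, by the number of co-brackets. On the associated graded the only surviving part of $d_{\mathrm{int}}$ is the internal cobar differential acting on the decoration, and its acyclicity away from the purely cocommutative part is precisely the Koszulness of $\Pois_m$. Hence this first spectral sequence collapses onto the subcomplex of maps factoring through $\Com^*\{m\}$, where a family $\phi_r$ is the datum of a single $S_r$-(anti)invariant graph $\gamma_r\in\Graphs_n(r)$.

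Third I would reduce the external vertices to be univalent. On the subcomplex just obtained I would filter by the total valence of the external vertices, equivalently by the number of edges incident to them. The leading term of $[\mu,-]$ then acts on the external vertices through the cocommutative coproduct, and---exactly as in the standard reduction of graph complexes with external legs to their univalent (hairy) part---it provides an explicit contracting homotopy on the subcomplex spanned by graphs that have an external vertex of valence $\geq 2$, exhibiting that subcomplex as acyclic. What survives is the span of graphs with univalent external vertices and cocommutative decoration, on which the residual vertex-splitting differential of $\Graphs_n$ restricts to the hairy differential. This identifies the final page with $\fHGC_{m,n}$ and proves that the inclusion \eqref{equ:fHGCDef} is a quasi-isomorphism.

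The main obstacle is the acyclicity and convergence of the two spectral sequences. One has to check that on each associated graded the leading differential is genuinely the internal cobar (respectively coproduct) differential, with no surviving cross-terms spoiling the splitting; and, since the complex is an infinite product over graphs of unbounded size, one must run the argument on the finite-dimensional pieces of fixed loop order and fixed number of external vertices, where convergence is automatic because the differential preserves these pieces. A subsidiary but real difficulty is the bookkeeping of the degree shifts and Koszul signs (the $\{m\}$-desuspensions and the degree formula $n\,\#(\text{internal vertices})-(n-1)\,\#(\text{edges})+m(\#(\text{hairs})-1)$), so that the surviving complex is identified with $\fHGC_{m,n}$ on the nose and not merely up to an overall shift.
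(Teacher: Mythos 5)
First, note that the paper does not prove this statement itself: it is quoted from \cite{Turchin2,TW}, so there is no in-text proof to compare against. Your overall skeleton (realize the deformation complex as $\Graphs_n$-graphs with $\Pois_m^\vee$-decorated external vertices, then reduce the decoration to $\Com^*\{m\}$ and the external vertices to univalent ones) is indeed the shape of the arguments in the cited references. However, as written the argument has genuine gaps. The most concrete one is your convergence step: the differential does \emph{not} preserve the number of external vertices. The bracket $[\mu,-]$ with the arity-two Maurer--Cartan element raises the arity $r$ by one (it splits an external vertex, or adjoins an isolated one), so the complex does not decompose into pieces of fixed $r$, and the proposed reduction to ``finite-dimensional pieces of fixed loop order and fixed number of external vertices'' is not available. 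What the full differential preserves is the loop order $b_1$, not the arity, and convergence has to be argued on those (a priori infinite) pieces. A second structural confusion: $\Pois_m^\vee$ is a cooperad with zero internal differential, so there is no separate ``internal cobar differential acting on the decoration'' to put on an associated graded. The contribution of the cobar differential of $\hoPois_m$ to $\Def(\hoPois_m\to\Graphs_n)$ \emph{is} the convolution bracket $[\mu,-]$; your $d_{\mathrm{int}}$ double-counts it. Consequently, both of your reductions --- killing the co-Lie part of the decoration, and killing external vertices of valence $\neq 1$ --- are effected by pieces of the \emph{same} differential $[\mu,-]$, and running them as two independent spectral sequences requires checking that the two filtrations are compatible and that the relevant piece of $[\mu,-]$ actually survives to the page where you want to use it.

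More importantly, the two collapse statements you invoke are not formal consequences of Koszulness of $\Pois_m$; they are the actual content of the theorem. The statement that has to be proved is: for each fixed internal core (internal vertices, edges among them, and a multiset of half-edges to be attached to external vertices), the complex obtained by distributing those half-edges over external vertices labelled by elements of $\Pois_m^*\{m\}(r)$, with the differential that merges pairs of external vertices and multiplies their Poisson labels, has cohomology concentrated on the configurations where every external vertex is univalent and the label is a pure commutative product. This is a Koszul-type computation for the twisting morphism $\Pois_m^\vee\to\Com$ (equivalently, the Hodge-decomposition/Harrison-complex argument of \cite{Turchin2,Turchin3}, or the explicit homotopy in \cite{TW}); it is related to, but not literally, the acyclicity of the Koszul complex of $\Pois_m$, and the ``explicit contracting homotopy'' you invoke in your third step is precisely the thing that needs to be constructed. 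You should also verify the preliminary claim that $\fHGC_{m,n}$ is closed under the full differential, i.e.\ that the isolated-external-vertex terms produced by $[\mu,-]$ on a univalent, commutatively decorated graph cancel; this is true but is part of the bookkeeping you defer.
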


Any hairy graph is naturally a union of its connected pieces.
Hence we may identify $\fHGC_{m,n}$ with the completed symmetric product space  of the connected subcomplex 
\[
 \HGC_{m,n} \subset \fHGC_{m,n} = S^+\left( \HGC_{m,n}[-m]\right) [m].
\]
(The sign $+$ in $S^+$ means that  the degree zero term is omitted.) Finally, let us note that the deformation complex \eqref{equ:Defcomplex} above is naturally a dg Lie algebra, and the subspaces 
\[
 \HGC_{m,n} \subset \fHGC_{m,n} \subset  \Def(\hoPois_m \xrightarrow{*} \Graphs_n)
\]
are closed under the Lie bracket, so that the spaces $\HGC_{m,n}$ and $\fHGC_{m,n}$ carry natural dg Lie algebra structures. The Lie bracket in question is  
 described by~\eqref{equ:grbracket}, see~\cite{TW}.

\subsection{Little discs deformations in codimensions 0 and 1}
As we explained above, because of the relative formality of the little discs operads in codimension $n-m>1$, for any field $\K$ of characteristic zero, the deformation complex $\Def(E_m\to E_n)$
is quasi-isomorphic to $\fHGC_{m,n}= S^+\left( \HGC_{m,n}[-m]\right) [m]$.  Recall Theorem~\ref{thm:loop01} which describes the loop orders zero and one  of the
hairy graph-homology $H(\HGC_{m,n})$.  Theorems~\ref{thm:main_even} and~\ref{thm:main_odd} deal with the loop order $r\geq 2$ of the hairy graph-homology.
These two theorems are respectively related to the two results~\cite[Theorem~1.3]{grt} and~\cite[Theorem~4]{TW}  obtained earlier by the authors:

\begin{thm}[Willwacher~\cite{grt}]\label{thm:codim0}
For $n\geq 2$,
\beq{eq:codim0}
H(\Def(E_n\xrightarrow{id} E_n)) \simeq H(\Def(\Pois_n\xrightarrow{id} \Pois_n)) =
S^+\left(\K[-n-1]\oplus V_n [-n-1] \oplus \prod_{r\geq 2} H(\GC^r_{\Det^{\otimes n}})[nr-n-1]\right)[n],
\eeq
where $\K[-n-1]$ is a one-dimensional space  spanned by the only class in loop order zero; $V_n$ stands for the space of classes of loop order one
(so called wheels):
\beq{eq:Vn}
V_n=\bigoplus_{{j\geq 1}\atop {j\equiv 2n+1\mod 4}} \K[n-j];
\eeq
the rest is the product whose $r$-th term describes the homology of loop order $r$.
\end{thm}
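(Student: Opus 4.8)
The plan is to deduce Theorem~\ref{thm:codim0} from the relative formality of the little discs operads together with Theorems~\ref{thm:loop01} and~\ref{thm:main_even}, in exactly the spirit of the reduction carried out in Section~\ref{s:r2}. Since $n-m=0\neq 1$, the map $E_n\to E_n$ is formal, so that $\Def(E_n\xrightarrow{id}E_n)\simeq \Def(\Pois_n\xrightarrow{id}\Pois_n)$, and the right-hand side is the deformation complex of the identity regarded as a Maurer--Cartan element of $\Def(\hoPois_n\xrightarrow{*}\Graphs_n)\simeq \fHGC_{n,n}$. The first step is to identify this Maurer--Cartan element with the line graph $L$: the only operation on which the identity differs from the projection-through-$\Com$ map $*$ is the Poisson bracket, whose image in $\Graphs_n$ is represented by $L$, and $L$ is genuinely Maurer--Cartan since $dL=0$ and $[L,L]=0$ (the line graph has no internal vertex to which a hair could be attached). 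Hence
\[
\Def(E_n\xrightarrow{id}E_n)\simeq \bigl(\fHGC_{n,n},\, d+[L,-]\bigr)=\bigl(S^+(\HGC_{n,n}[-n])[n],\, d+[L,-]\bigr).
\]

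Next I would note that $[L,-]$ is a derivation of the symmetric product which, like $d$, preserves the number of connected components: by Proposition~\ref{P:L} it merely attaches a single hair to a connected graph. Thus the total differential $d+[L,-]$ preserves word length, and over a field of characteristic zero the Künneth theorem gives
\[
H\bigl(\Def(E_n\xrightarrow{id}E_n)\bigr)\cong S^+\Bigl(H\bigl(\HGC_{n,n},\,d+[L,-]\bigr)[-n]\Bigr)[n].
\]
It therefore remains to compute $H(\HGC_{n,n},\,d+[L,-])$, which splits according to the loop order $r$ because $[L,-]$ preserves $r$.

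For the main range I would run the spectral sequence of the filtration by the number of hairs, in which $d$ is the leading term and $[L,-]$ the next. For $r\geq 2$ the first page is $H(\HGC_{n,n}^{r},d)$, computed by Theorem~\ref{thm:main_even}, and its $d_1$-differential is induced by $[L,-]$; by that theorem $d_1$ carries the type-$I$ summand $H(\GC^r_{S^hH_1})$ (resp. its $\Det$-twist for $n$ odd) isomorphically onto the type-$II$ summand at $h+1$ and kills the type-$II$ summands. All such pairs cancel, leaving only the type-$II$ term at $h=1$, namely $H(\GC^r_{\Det^{\otimes n}})[nr-1]$ (using $\Det^{\otimes n}\cong\K$ for $n$ even and $\cong\Det$ for $n$ odd); since this occupies a single column the sequence degenerates there. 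For $r=0,1$ I would invoke Theorem~\ref{thm:loop01}: in loop order $0$ the unique class $L$, of degree $m-n+1=1$, has nothing to pair with and survives, contributing $\K[-1]$; in loop order $1$ the wheel classes occur only at hair numbers $\equiv n+1\bmod 2$, so $[L,-]$, which raises the hair number by one, lands in a vanishing homology group and is zero. Hence all wheels survive, and matching the wheel degree $2k-n$ with the index $j=2k-1$ identifies them with $V_n[-1]$ (the congruence $k\equiv n+1\bmod 2$ becoming precisely $j\equiv 2n+1\bmod 4$). Assembling the three loop-order ranges and applying the $[-n]$ shift inside $S^+(-)[n]$ reproduces the stated formula.

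The main obstacle is the first step: establishing that the identity-deformation complex is precisely the $L$-twist of $\fHGC_{n,n}$. This rests on the relative formality of $E_m\to E_n$ and on correctly translating the operadic deformation theory of $\hoPois_n\to\Graphs_n$ into the hairy graph language, including the verification that $L$ is the Maurer--Cartan element encoding the bracket. Once this translation is in place the remaining work is routine: the cancellation is \emph{exact} because the splitting in Theorem~\ref{thm:main_even} holds already at the level of complexes, and only the bookkeeping of degree shifts remains. A secondary point requiring care is the convergence of the hair-number spectral sequence, which holds because in each fixed loop order and total degree only finitely many hair numbers contribute.
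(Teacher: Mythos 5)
Your proposal is correct and follows essentially the same route the paper takes: it identifies $\Def(\Pois_n\xrightarrow{id}\Pois_n)$ with $\fHGC_{n,n}$ twisted by the Maurer--Cartan element $L$ (the step the paper defers to \cite{grt}), and then uses Theorem~\ref{thm:main_even} to cancel the repeated summands in loop order $r\geq 2$, leaving only $\HGC_{n,n}^{r,1,II}$, with Theorem~\ref{thm:loop01} handling loop orders $0$ and $1$. Your degree bookkeeping and the matching of the wheel congruence $k\equiv n+1\bmod 2$ with $j\equiv 2n+1\bmod 4$ are also consistent with the stated formula.
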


Willwacher proves the above theorem by showing that the deformation complex $\Def(\Pois_n\xrightarrow{id} \Pois_n)$ 
is equivalent to the full hairy graph-complex $\fHGC_{n,n}$ with deformation being the initial differential (sum of expansions of internal vertices) 
plus the bracket with the line graph $L$.  This perturbation on the level of generators does not change the homology in loop order zero and one, but 
 in loop order $r\geq 2$ it kills all the repeated terms described by Theorem~\ref{thm:main_even}, leaving only the part of the homology arising from 
 $\HGC_{n,n}^{r,1,II}$. 
 
 \begin{thm}[Turchin and Willwacher~\cite{TW}]\label{thm:codim1}
For $n\geq 2$,
\beq{eq:codim1}
H(\Def(E_{n-1}\to E_n)) \simeq
S^+\left(\K[-n]\oplus V_n [-n] \oplus \prod_{r\geq 2} H(\GC^r_{\Det^{\otimes n}})[nr-n]\right)[n-1],
\eeq
where $\K[-n]$ is a one-dimensional space  spanned by the only class (the tripod $T$) in loop order zero; $V_n$ stands for the space of classes of loop order 
one~\eqref{eq:Vn};
the rest is the product whose $r$-th term describes the homology of loop order $r$.
\end{thm}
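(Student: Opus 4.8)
The plan is to mirror, in odd codimension, the argument that Willwacher uses in codimension zero (Theorem~\ref{thm:codim0}), with the tripod $T$ and Theorem~\ref{thm:main_odd} playing the roles that the line graph $L$ and Theorem~\ref{thm:main_even} play there. The first step is to present the deformation complex as a twisted hairy graph complex. Since the codimension one inclusion $E_{n-1}\to E_n$ is precisely the non-formal case, one cannot simply replace it by the induced map of homology operads; the non-formality is instead encoded in a non-trivial Maurer--Cartan element. Following~\cite{TW}, I would establish an equivalence
\[
\Def(E_{n-1}\to E_n)\simeq \bigl(\fHGC_{n-1,n},\, d+[T,-]\bigr),
\]
where $d$ is the intrinsic hairy differential and $T$ is the tripod, the unique loop-order-zero homology class in odd codimension (Theorem~\ref{thm:loop01}). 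The tripod here occupies exactly the position that $L$ occupies in codimension zero: it is the leading graph of the Maurer--Cartan element representing the map, and the summand $[T,-]$ is the concrete incarnation of the non-formality.

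Next I would reduce to the connected generators. Because $\fHGC_{n-1,n}=S^+\bigl(\HGC_{n-1,n}[-(n-1)]\bigr)[n-1]$, the operation $[T,-]$ is a derivation of the symmetric product, and we work over a field of characteristic zero, it suffices to compute the homology of $(\HGC_{n-1,n},\,d+[T,-])$ and then apply the exact functor $S^+$. I would organize this computation by the defect filtration of Theorem~\ref{thm:main_odd}, which $d$ preserves and which $[T,-]$ raises by one. On the associated graded the $[T,-]$ term drops out, so the first page computes the $d$-homology: by Theorem~\ref{thm:loop01} this gives $\K\,T$ in loop order zero and the wheel classes $V_n$ in loop order one, while by Theorem~\ref{thm:main_odd} it gives, in each loop order $r\geq 2$, the two-column object $E_1^{0*}\oplus E_1^{1*}$ for every hair number $h$.

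The cancellation is then read off directly from Theorem~\ref{thm:main_odd}. The induced differential on this page is the sum of the internal $d_1$ (preserving $h$) and the map induced by $[T,-]$, which sends $E_1^{0*}$ of the $(r,h)$-component \emph{isomorphically} onto $E_1^{1*}$ of the $(r,h+2)$-component and annihilates $E_1^{1*}$. Ordering the contributions by hair number, the $[T,-]$-part is an isomorphism onto the superdiagonal, so a ``largest-$h$'' argument shows the total differential is injective: its kernel vanishes and its cokernel is the single unpaired summand $E_1^{1*}$ in hair number $h=1$, namely $H(\GC^r_{\Det^{\otimes n}})[nr-1]$ (using $\Det^{\otimes n}\cong\K$ for $n$ even and $\cong\Det$ for $n$ odd, since $d_1$ is trivial for $h\le 2$ and $E_1^{1*}$ vanishes for $h=2$). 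No higher differentials survive, as only the defect-one column remains. Exactly as in codimension zero, the perturbation leaves the loop-order-zero and loop-order-one homology unchanged, so $\K\,T$ and $V_n$ persist (here $[T,T]=0$ prevents any loop-zero cancellation). Incorporating the shift $[-(n-1)]$ inside $S^+$ turns $[nr-1]$ into $[nr-n]$, and assembling the symmetric product with the outer shift $[n-1]$ yields~\eqref{eq:codim1}; the pattern matches the table in Figure~\ref{fig:graphtable_odd}.

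The main obstacle is the very first step, the identification of $\Def(E_{n-1}\to E_n)$ with the $T$-twisted hairy complex. In codimension zero relative formality reduces everything immediately to $\Def(\Pois_n\xrightarrow{id}\Pois_n)$, but here the absence of formality means this identification genuinely requires the obstruction-theoretic analysis of~\cite{TW}, both to pin down the Maurer--Cartan element and to verify that its only relevant graphical component is the tripod. Once this $T$-twisted model is in place, Theorem~\ref{thm:main_odd} makes the remaining computation essentially automatic, which is exactly the simplification this paper offers over the original argument.
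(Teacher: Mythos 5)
The paper does not actually prove this theorem: it is quoted from \cite{TW}, and the text following it only sketches how Theorem~\ref{thm:main_odd} \emph{explains} the answer, explicitly noting that the proof in \cite{TW} ``used a different combinatorial approach''. Your proposal is precisely an expansion of that sketch, so in spirit it matches what the paper intends; the cancellation bookkeeping (pairing $E_1^{0*}$ in hair number $h$ against $E_1^{1*}$ in hair number $h+2$, leaving only the unpaired $h=1$ summand $H(\GC^r_{\Det^{\otimes n}})[nr-1]$, which the shift $[-(n-1)]$ turns into $[nr-n]$) is carried out correctly.

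There is, however, one concrete gap in your first step. You assert the identification $\Def(E_{n-1}\to E_n)\simeq\bigl(\fHGC_{n-1,n},\,d+[T,-]\bigr)$ with the Maurer--Cartan element equal to the tripod on the nose. The paper explicitly warns against exactly this: the Maurer--Cartan element merely \emph{contains $T$ as a summand} and ``might have graphs of a positive loop order'', so that loop order on $\Def(E_{n-1}\to E_n)$ is only defined as a filtration. Your entire computation is organized loop order by loop order, and the product decomposition in \eqref{eq:codim1} (the claim that the $r$-th factor \emph{is} the loop-order-$r$ homology) therefore requires either pinning down the higher-loop-order tail of the Maurer--Cartan element from \cite{TW}, or an additional argument that the spectral sequence of the loop-order filtration degenerates after the $[T,-]$-cancellation; neither is supplied. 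A second, minor point: the survival of the loop-order-zero and one classes is not a consequence of $[T,T]=0$ alone --- that only shows $T$ remains a cycle; one must also check that no class maps \emph{onto} $T$ or onto the wheels under the perturbed differential (which holds here because $[T,-]$ strictly increases the number of hairs and the relevant preimages would require graphs that do not exist, e.g.\ a one-hair tree with all internal vertices of valence at least three).
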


 Again the proof is obtained by showing that $\Def(E_{n-1}\to E_n)$ is quasi-isomorphic to $\fHGC_{n-1,n}$  with the differential perturbed by a certain Maurer-Cartan element which contains as a summand the tripod graph $T$. This explains why on the level of generators in loop order $r\geq 2$ all the repeated terms described by Theorem~\ref{thm:main_odd} are killed. (Theorem~\ref{thm:main_odd} could simplify or rather enlighten the proof of Theorem~\ref{thm:codim1} given in~\cite{TW}, where we used a different combinatorial approach to tackle this problem.) We also warn the reader that strictly speaking the loop order in  $\Def(E_{n-1}\to E_n)$  is defined only as a filtration as the Maurer-Cartan element in question might have graphs of a positive loop order. The fact that one gets  a non-trivial cancellation in the deformation homology was used in~\cite{TW} to prove  that the map of operads  $E_{n-1}\to E_n$ is not formal.

\subsection{Relative versus target deformations}\label{ss:rel_target}
One could consider the natural map
\beq{eq:rel_target}
\Def(E_n\xrightarrow{id} E_n)\to \Def(E_m\to E_n)
\eeq
and the induced map in homology. In other words the question is to describe how the deformations of the target can be seen as a part of relative deformations of the little discs operads. 

The hardest case
is when the map $E_m\to E_n$ is not formal, i.e. when $m=n-1$. In this case the authors gave a partial answer~\cite[Theorem~2]{TW} which states that under the 
map~\eqref{eq:rel_target}, the space of generators of~\eqref{eq:codim0} under the cup product is sent isomorphically to the space of generators of~\eqref{eq:codim1}.\footnote{In other words, the $E_{n-1}$ deformations are rigid within the $E_n$ structure being all induced by the $E_n$ deformations. We call this fact
{\it algebraic Cerf lemma} in~\cite{TW}.} 
 We also believe, but were not able to prove, that the elements obtained by taking cup product in~\eqref{eq:codim0} (symmetric powers of order $\geq 2$) are sent to zero.  
 
 Theorems~\ref{thm:main_even}-\ref{thm:main_odd} help to solve this problem in the easy case $n-m\geq 2$.

\begin{thm}\label{thm:rel_target}
For $n-m\geq 2$, the map
\beq{eq:Hrel_target}
H(\Def(E_n\xrightarrow{id} E_n))\to H(\Def(E_m\to E_n))
\eeq
sends
\begin{itemize}
\item the only loop order zero generator of~\eqref{eq:codim0} to zero;
\item  the loop order one generators to zero except the 1-wheel, which is sent to the graph-cycle 
 \begin{tikzpicture}[scale=.2]
 \draw (0,0) circle (1);
 \draw (-180:1) node[int]{} -- +(-1.2,0);
 \end{tikzpicture} (this happens when $n$ is even, otherwise all 1-loop classes with no exception are sent to zero);
 \item all the loop order $\geq 2$ generators of~\eqref{eq:codim0} isomorphically to the corresponding piece of the homology of 
 $\HGC_{m,n}^{r,1,II}$;\footnote{Notice that the complex $\HGC_{m,n}^{r,1}$ does not depend on $m$, and thus the splitting of Theorem~\ref{thm:main_even}
 $\HGC_{m,n}^{r,1} = \HGC_{m,n}^{r,1,I}\oplus \HGC_{m,n}^{r,1,II}$ takes place for the odd codimension as well. In terms of graphs, the map~\eqref{eq:Hrel_target}
 sends a bald graph to the sum of graphs obtained by attaching a hair in one of its vertices.}
 \item all the elements obtained by taking a cup-product (i.e. elements from any symmetric power of order $\geq 2$) to zero.
 \end{itemize}
 \end{thm}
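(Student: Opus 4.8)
The plan is to carry out everything inside the hairy graph complex models of the two deformation complexes. For the source I would use Willwacher's identification $\Def(E_n\xrightarrow{id}E_n)\simeq(\fHGC_{n,n},\partial+[L,-])$, and for the target the relative formality in codimension $n-m\geq2$, giving $\Def(E_m\to E_n)\simeq\fHGC_{m,n}=S^+(\HGC_{m,n}[-m])[m]$ with its intrinsic differential $\partial$. The map \eqref{eq:Hrel_target} is induced by precomposition with $\Pois_m\to\Com\to\Pois_n$; on graph representatives it is the ``attach one hair'' operation recorded in the footnote. The first step is to check that this map, being induced by functoriality of $\Def$, is a degree-preserving chain map that also preserves the loop order.

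For the loop-order $r\geq2$ statement I would argue as follows. The surviving generators of \eqref{eq:codim0} in loop order $r$ are the non-hairy classes $H(\GC^r_{\Det^{\otimes n}})$. Attaching a hair lands in the one-hair complex $\HGC_{m,n}^{r,1}$; identifying the image with the type-II summand via the canonical section $s$ of Lemma \ref{l:splitting}, Theorem \ref{thm:main_even} gives $H(\HGC_{m,n}^{r,1,II})\cong H(\GC^r_{\Det^{\otimes n}})[nr-1]$. The point is that for $h=1$ the shift $nr+(h-1)(n-m-2)-1$ loses its $m$-dependence, so the source and target type-II classes sit in the same degree and $\Phi$ is the identity under these identifications—hence the asserted isomorphism onto $H(\HGC_{m,n}^{r,1,II})$.

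The loop-$0$ and loop-$1$ claims then reduce to degree bookkeeping against Theorem \ref{thm:loop01}. Since $\Phi$ is degree preserving, it is enough to compare degrees. The loop-$0$ generator lives in degree $1$, while $H(\HGC_{m,n}^0)$ is concentrated in degree $1-(n-m)$ (resp. $3-2(n-m)$); for $n-m\geq2$ these never agree, so the image is exact. For loop $1$, the $k$-vertex wheel sits in degree $2k-n$ and the target wheels in degrees $k'(m-n+2)-m$; the matching condition $2(k'-k)=(n-m)(k'-1)$ has, for $n-m\ge2$, the unique solution $k=k'=1$. Thus every wheel with $k\ge2$ is sent into a degree carrying no loop-$1$ cohomology and hence to zero, whereas the $1$-wheel—present precisely when $n$ is even—is sent to the unique target class, the graph-cycle.

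For the cup-product statement the key mechanism is that $\Phi$ attaches exactly one hair and that $\fHGC_{m,n}=S^+(\HGC_{m,n}[-m])[m]$ contains only graphs all of whose components carry at least one hair. A symmetric power of order $\geq2$ built out of loop-$\ge2$ generators is represented by a disjoint union of two or more non-hairy graphs; since adding a single hair is a derivation for the disjoint-union product, every resulting summand retains a hairless component and therefore vanishes. Products containing a loop-$0$ or a loop-$1$ (non-$1$-wheel) factor vanish already because that factor is killed by the previous paragraph. I expect the main obstacle to be precisely the chain-level description of $\Phi$: reconciling the operadic precomposition with the ``attach a hair'' picture in the presence of the perturbation $[L,-]$, and pushing the derivation argument through the remaining mixed products (e.g.\ a $1$-wheel times a loop-$\ge2$ class), where the hairless-component mechanism must be combined with the degree count of the previous paragraph to force vanishing.
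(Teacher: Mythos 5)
Your degree bookkeeping for loop orders $0$ and $1$ is essentially sound (though note that for $n-m=2$ the matching condition $2(k'-k)=(n-m)(k'-1)$ only forces $k=1$ and leaves $k'$ free, so the ``unique solution'' claim is off; the conclusion that all $k\geq 2$ wheels die still holds). However, the proposal has a genuine gap exactly where you flag it: the chain-level description of the map. The paper's key idea, which you are missing, is that since $\Pois_m\xrightarrow{*}\Pois_n$ factors through $\Com$, the map \eqref{eq:Hrel_target} factors as
\[
\Def(\Pois_n\xrightarrow{id}\Pois_n)\to\Def(\Com\to\Pois_n)\to\Def(\Pois_m\xrightarrow{*}\Pois_n),
\]
and the middle term is modeled by the \emph{connected one-hair} complex $\HGC_n^1=\prod_{r\geq 1}\HGC^{r,1}_{m,n}$, which is independent of $m$. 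Via the commutative square \eqref{eq_sq} the composite becomes $\fHGC_{n,n}^L\to\HGC_n^1\hookrightarrow\fHGC_{m,n}$, where the first arrow kills every graph that is disconnected or has a number of hairs different from one. This single observation delivers all four bullets at once: the line graph has two hairs, so the loop-$0$ class dies; the one-hair wheels survive or die as stated; the loop-$\geq 2$ generators, represented by one-hair classes, map identically onto $H(\HGC^{r,1,II}_{m,n})$; and cup products die because their representatives are disconnected.

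Your substitute mechanism for the cup-product bullet does not work as written. You represent a symmetric power of order $\geq 2$ by a disjoint union of bald graphs and extend ``attach one hair'' as a derivation, concluding that each summand ``retains a hairless component''. But bald components do not exist in the source model $\fHGC_{n,n}^L$ either: the actual cochain representative of a product of two loop-$\geq 2$ generators is a disjoint union of two \emph{one-hair} graphs, and applying ``attach one more hair'' to it produces perfectly admissible elements of $\fHGC_{m,n}$ with no hairless component, so nothing in your argument forces them to vanish. Without the factorization through $\Def(\Com\to\Pois_n)\simeq\HGC_n^1$ (or some other justification of the chain-level formula for the map, which you correctly identify as the main obstacle but do not supply), the fourth bullet, and with it the precise identification in the third bullet, remains unproved. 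This is the missing idea rather than a technicality.
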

 
\begin{proof}
Because of the relative formality result it is enough to study the map
\beq{eq:rel_target_formal}
\Def(\Pois_n\xrightarrow{id}\Pois_n)\to \Def(\Pois_m\xrightarrow{*}\Pois_n).
\eeq
(In fact for $m=1$ we need to consider the operad $\Ass$ instead of $\Pois_1$, but the argument given below will still work as $\Def(\Ass\xrightarrow{*}\Pois_n)$ is also equivalent to the full hairy graph-complex $\fHGC_{1,n}$, see~\cite{Turchin1}.) Since the map
$\Pois_m\xrightarrow{*}\Pois_n$ factors through the commutative operad $\Com$, the map~\eqref{eq:rel_target_formal} can itself be
viewed as a composition
\beq{eq:rel_target_fcomp}
\Def(\Pois_n\xrightarrow{id}\Pois_n)\to \Def(\Com\to \Pois_n)\to \Def(\Pois_m\xrightarrow{*}\Pois_n).
\eeq
The complex $\Def(\Com\to\Pois_n)$ is modeled by the complex of derivations of the map $\hoCom\xrightarrow{*}\Graphs_n$ with the latter one by a similar argument quasi-isomorphic to the subcomplex of $\HGC_{m,n}\subset\fHGC_{m,n}$ spanned by the graphs with only one hair. This subcomplex
$\prod_{r\geq 1} \HGC_{m,n}^{r,1}$ does not depend on $m$ and will be denoted by $\HGC_n^1$. The second map in~\eqref{eq:rel_target_fcomp} is thus modeled
by the inclusion $\HGC_n^1\hookrightarrow\fHGC_{m,n}$. On the other hand, the complex $\Def(\Pois_n\xrightarrow{id}\Pois_n)$ is modeled by the complex 
of derivations of the quasi-isomorphism $\hoPois_n\xrightarrow{\simeq}\Graphs_n$, which according to~\cite{grt} is quasi-isomorphic to $\HGC_{n,n}^L$ -- the 
hairy graph-complex $\HGC_{n,n}$ with the differential deformed by the Maurer-Cartan element $L$. One has that the square
\begin{equation}\label{eq_sq}
\xymatrix{
  \fHGC_{n,n}^L\ar[r]^-\simeq\ar[d] &\Def(\hoPois_n\xrightarrow{\simeq}\Graphs_n)\ar[d] \\
   \HGC_n^1\ar[r]^-\simeq & \Def(\hoCom\xrightarrow{*}\Graphs_n)
   }
  \end{equation}  
  commutes. Thus the first map in~\eqref{eq:rel_target_fcomp} is modeled by the projection
  \[
  \fHGC_{n,n}^L\to \HGC_{n}^1,
  \]
  sending all graphs with $\geq 2$ hairs to zero and the graphs with exactly one hair to themselves.
  From this explicit description of the map~\eqref{eq:rel_target} as the composition 
  \[
  \fHGC_{n,n}^L\to \HGC_{n}^1 \to \fHGC_{m,n}
  \]
  the result easily follows.
  \end{proof}
  
  \begin{rem}
  One can also ask how the deformations of the source are seen in the relative deformations of the little discs operads or in other words one can look at the induced map in the homology of the natural map
  \[
  \Def(E_m\xrightarrow{id} E_m)\to \Def(E_m\to E_n).
  \]
  In case of codimension $n-m\geq 2$ this map is trivial as it factors through $\Def(E_m\xrightarrow{*} \Com)$ which has trivial homology.
  \end{rem}
  
%

\section{Application: Serre fibrations whose fiber is a wedge of circles}\label{s:serre_fibr}

Lemma~\ref{l:splitting} about the splitting of the coefficient system $C_\bullet\cong \K\oplus \tilde C_\bullet$ is related to a curious topological phenomenon
that the analogue of the Euler class for fibrations of wedges of $\geq 2$ circles is always rationally trivial and as a consequence we prove that the following 
result holds.

\begin{thm}\label{thm:serre_fibr}
The rational (co)homology Serre spectral sequence associated to any Serre fibration whose fiber is homotopy equivalent to a wedge of $r\geq 2$ circles abuts at the second page.
\end{thm}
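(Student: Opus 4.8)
The plan is to show that the only differential in the Serre spectral sequence that can be non-zero already vanishes, and to trace this vanishing back to Lemma~\ref{l:splitting}. Write $F\simeq\vee_r S^1$ for the fiber. Since $\tilde H_*(F;\mathbb{Q})$ is concentrated in degree $1$, the cohomology $E_2$-page $E_2^{p,q}=H^p(B;\mathcal{H}^q(F))$ has only the two rows $q=0$ and $q=1$ (and dually in homology). Hence $d_r=0$ for all $r\geq 3$ for degree reasons, and the only possibly non-zero differential is the transgression $d_2\colon E_2^{p,1}\to E_2^{p+2,0}$, so the assertion is equivalent to the vanishing of $d_2$. By naturality, $d_2$ is the transgression and is induced by a single universal ``Euler'' class $\tau\in H^2(B;\mathcal{H}_1(F))$, where $\mathcal{H}_1(F)$ is the monodromy local system $H_1(F;\mathbb{Q})\cong\mathbb{Q}^r$; concretely $d_2$ is the action of $\tau$ via the evaluation pairing $H^1(F)\otimes H_1(F)\to\mathbb{Q}$. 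It therefore suffices to prove that $\tau=0$.

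Next I would reduce to a universal situation. Because $r\geq 2$, the group $F_r$ is centerless, so the monoid of self-homotopy equivalences $\mathrm{haut}(\vee_r S^1)$ is homotopy equivalent to the \emph{discrete} group $\Out(F_r)$ (the identity component is $K(Z(F_r),1)\simeq *$). Consequently every Serre fibration with fiber $\vee_r S^1$ is, up to fiber-homotopy equivalence, pulled back along a map $B\to K(\Out(F_r),1)$ from the universal such fibration, namely the one associated to the extension $1\to F_r\cong\mathrm{Inn}(F_r)\to\Aut(F_r)\to\Out(F_r)\to 1$. Since the Serre spectral sequence and the class $\tau$ are fiber-homotopy invariants and behave naturally under pullback, it is enough to treat this universal fibration. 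This is precisely where the hypothesis $r\geq 2$ enters topologically, paralleling the condition $2r-2\neq 0$ in Lemma~\ref{l:splitting}; for $r=1$ one has $\mathrm{haut}(S^1)\simeq O(2)$, which is not homotopy discrete, and the Euler class is genuinely non-trivial.

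For the universal fibration the Serre spectral sequence is the Lyndon--Hochschild--Serre spectral sequence of the above extension, and the fiberwise chains are modeled by the $\Out(F_r)$-equivariant cellular chain functor $C_\bullet\colon\Gr_r\to dg\Vect$ of Example~\ref{ex:Hcoeff} (using that the cell stabilizers in $\OS_r$ are finite, so $\OS_r$ is rationally a model for $K(\Out(F_r),1)$). Here is the crucial point: Lemma~\ref{l:splitting} provides not merely the row filtration, which any coefficient system carries via its Postnikov tower and which only reproduces the two-row spectral sequence, but a \emph{genuine direct-sum decomposition} $C_\bullet\cong\mathbb{Q}\oplus\tilde C_\bullet$ of coefficient systems. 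By Corollary~\ref{cor:formal} the summand $\mathbb{Q}$ sits in homological degree $0$ while $\tilde C_\bullet$ is quasi-isomorphic to $H_1(F)$ in degree $1$. A direct sum of coefficient systems yields a direct sum of spectral sequences with no differentials crossing between the summands; as each summand is concentrated in a single row, each piece collapses, and hence $d_2=0$ on the total spectral sequence. Equivalently, the natural section $s\colon\mathbb{Q}\to C_\bullet$ splits the bottom row $H_*(B;\mathbb{Q})$ off $H_*(E;\mathbb{Q})$ as a direct summand, forcing $\tau=0$. The homological statement follows by dualizing.

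The step I expect to be the main obstacle is the reduction of the second paragraph: making rigorous that an arbitrary Serre fibration, whose fibers carry no preferred cell or graph structure, has the same transgression as the universal extension fibration, so that the combinatorial splitting of Lemma~\ref{l:splitting} can be applied. This rests on the homotopy-discreteness of $\mathrm{haut}(\vee_r S^1)$ for $r\geq 2$ and on identifying the equivariant fiberwise chain complex with the coefficient system on $\Gr_r$. Once these are in place, the decisive input is simply that Lemma~\ref{l:splitting} upgrades the ever-present row filtration to an honest splitting of local systems, which is exactly the difference between the existence of a two-row spectral sequence and its degeneration at $E_2$.
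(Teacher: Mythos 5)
Your proposal is correct and follows essentially the same route as the paper: reduce to the vanishing of the $d_2$ transgression, identify it with a rational Euler class, pass to the universal fibration over $B\Out(F_r)$ using the homotopy-discreteness of the self-equivalences of $\vee_r S^1$ (the paper's Lemma~\ref{l:monoid}), and kill the class via the rational section $\sum_{v}\frac{\mathit{val}(v)-2}{2r-2}\,v$ of Lemma~\ref{l:splitting}. The paper phrases the last step as a rational (orbi-)section of the canonical graph bundle over $\OS_r$ (Theorem~\ref{thm:euler}) rather than as a direct-sum decomposition of coefficient systems, but this is the same splitting.
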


We start with the following lemma. The proof of the theorem is given in Subsection~\ref{ss:proof_serre}.

\begin{lemma}\label{l:monoid}
For any $r\geq 2$, all the components of the monoid $G(\vee_r S^1)$ of self homotopy equivalences of $\vee_r S^1$ are weakly contractible and $\pi_0 G(\vee_r S^1)=
\Out(F_r)$.
\end{lemma}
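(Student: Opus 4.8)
The plan is to analyze the (Hurewicz) fibration given by evaluation at the wedge point,
\[
\text{Map}_*(\vee_r S^1, \vee_r S^1) \longrightarrow \text{Map}(\vee_r S^1, \vee_r S^1) \xrightarrow{\;\mathrm{ev}\;} \vee_r S^1,
\]
which is available because $\vee_r S^1$ is a well-pointed CW complex. The monoid $G(\vee_r S^1)$ is the union of those path components of the total space $\text{Map}(\vee_r S^1, \vee_r S^1)$ consisting of homotopy equivalences; since being a homotopy equivalence is invariant under homotopy, $G(\vee_r S^1)$ is open and closed, and each of its components is one of the components $E_f$ of the total space.

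First I would identify the fiber. Since maps out of a wedge form a product, $\text{Map}_*(\vee_r S^1, \vee_r S^1) \cong (\Omega \vee_r S^1)^r$, and because $\vee_r S^1 = K(F_r,1)$ the loop space $\Omega \vee_r S^1$ is homotopy equivalent to the discrete group $F_r$. Hence the fiber is homotopy discrete, with $\pi_0 = F_r^r = \Hom(F_r, F_r)$ and all higher homotopy trivial. Free homotopy classes of self-maps of $K(F_r,1)$ are conjugacy classes of homomorphisms, and such a map is a homotopy equivalence if and only if it induces an automorphism; therefore $\pi_0 G(\vee_r S^1) = \Aut(F_r)/\mathrm{Inn}(F_r) = \Out(F_r)$, which is the second assertion.

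For the contractibility I would run the long exact homotopy sequence of the fibration on a single component $E_f$ with $f$ a homotopy equivalence, based at a point $g$ of the fiber over the wedge point. Since both the base $\vee_r S^1$ and the fiber are aspherical with the fiber homotopy discrete, the sequence forces $\pi_k(E_f) = 0$ for all $k \geq 2$, and in low degrees it reduces to
\[
0 \to \pi_1(E_f, g) \xrightarrow{\;\mathrm{ev}_*\;} \pi_1(\vee_r S^1) = F_r \xrightarrow{\;\partial\;} \pi_0(\text{fiber}) = \Hom(F_r, F_r),
\]
so that $\pi_1(E_f) \cong \ker\partial$. The crux is to identify $\partial$: this connecting map records the change-of-basepoint action on $g$, which on induced homomorphisms sends $\gamma \in F_r$ to $f_* \mapsto c_\gamma \circ f_*$, i.e. post-composition with conjugation by $\gamma$. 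Its kernel (the preimage of the class of $f_*$) is exactly the centralizer of $\mathrm{im}(f_*)$, and since $f$ is a homotopy equivalence $\mathrm{im}(f_*) = F_r$, whence $\ker\partial = Z(F_r)$.

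The main obstacle — and essentially the only step requiring care — is pinning down this connecting map as the conjugation action; granting it, the conclusion is immediate. For $r \geq 2$ the free group $F_r$ is centerless, so $\pi_1(E_f) = Z(F_r) = \{1\}$, and together with the vanishing of the higher homotopy groups and path-connectedness this shows that every component $E_f$ of $G(\vee_r S^1)$ is weakly contractible. As a consistency check, the argument correctly breaks down for $r = 1$, where $Z(\mathbb{Z}) = \mathbb{Z}$ and the components are homotopy equivalent to $S^1$; and since $G(\vee_r S^1)$ is group-like, all its components are mutually homotopy equivalent, matching the uniform conclusion.
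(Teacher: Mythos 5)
Your proof is correct and follows essentially the same route as the paper: both use the evaluation fibration $\mathrm{Map}_*(\vee_r S^1,\vee_r S^1)\to \mathrm{Map}(\vee_r S^1,\vee_r S^1)\to \vee_r S^1$ with homotopy-discrete fiber, and your identification of the connecting map as post-composition with conjugation (with kernel $Z(F_r)=1$ for $r\geq 2$) is exactly the paper's appeal to the injectivity of $F_r\to\Aut(F_r)$.
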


\begin{proof}
Consider the Serre fibration
\beq{eq:taking_bp}
G(\vee_r S^1)\to \vee_r S^1,
\eeq
obtained by taking the image of the base point. Its fiber is the monoid $G_*(\vee_r S^1)$ of pointed self-homotopy equivalences. The latter space has all components contractible as this is true for $\Omega(\vee_r S^1)$. Thus, $G_*(\vee_r S^1)\simeq \Aut(F_r)$. The homotopy fiber of the map $G_*(\vee_r S^1)\to G(\vee_r S^1)$
is $\Omega(\vee_r S^1)\simeq F_r$ by the general properties of fibration sequences. On the other hand, for $r\geq 2$, the map $F_r\to \Aut(F_r)$ (each element $x$ is sent to the conjugation by $x$) is injective, thus its fiber (which is equivalent to $\Omega G(\vee_r S^1)$ from our fibration sequence) is contractible.  Thus the connected component of the identity (and therefore every component of $G(\vee_r S^1)$ since its $\pi_0$ is a group) is weakly contractible. From the long exact sequence for  fibration~\eqref{eq:taking_bp},
we get $\pi_0 G(\vee_r S^1)=\Out(F_r)$ and $\pi_i G(\vee_r S^1)=0$ for $i\geq 1$. 
\end{proof}

\subsection{Analogue of the Euler class}\label{ss:euler}
Recall that for a Serre fibration $F\to E\to B$ whose fiber $F$ is a circle, the Euler class $e\in H^2(B,H_1(F))$ is defined as the obstruction to the existence of a section.    
     In fact it is an obstruction of extension of a section from the 1-skeleton of $B$ to its 2-skeleton. (It does not depend on the choice of the section over the 1-skeleton.) In case $F\simeq \vee_r S^1$ the obstruction of a section can be similarly defined as a certain class $e_r\in H^2(B,H_1(F))$. In particular if a section exists, then this class is zero. More generally for any ring $\K$ of coefficients and assuming that B is a regular $CW$ complex, if for any cell of $B$ one can find a formal $\K$ linear combination $\sum_i\alpha_i s_i$, with $\sum_i \alpha_i=1$,  of sections $s_i$  which glue together compatibly on the boundary, then the  image $e_r^\K$ of this class in $H^2(B,H_1(F,\K))$ must also be zero. We call such an object a $\K$-linear section. In case $\K=\Q$ we call it also a rational section.

 \begin{thm}\label{thm:euler}
 For any Serre fibration $F\to E\to B$ with $F$ homotopy equivalent to a wedge of $r\geq  2$ circles, the class $e_r^\Q\in H^2(B,H_1(F,\Q))$ is trivial.
 \end{thm}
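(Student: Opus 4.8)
The plan is to produce a \emph{$\Q$-linear section} of the universal bundle, in the sense recalled just before the statement, and then invoke the criterion that the existence of such a section forces $e_r^\Q$ to vanish; the required section is nothing but the natural weighted barycentre underlying the splitting of Lemma~\ref{l:splitting}. Since $e_r$ depends only on the fiber-homotopy type of $F\to E\to B$ and is natural under pullback, it is pulled back from a universal class over $BG(\vee_r S^1)$. By Lemma~\ref{l:monoid} we have $BG(\vee_r S^1)\simeq B\Out(F_r)$, so it suffices to treat the universal bundle over $B\Out(F_r)$. The key point is that this bundle has a model with honest graph fibers: the contractible outer space $\OX_r$ is a model for $E\Out(F_r)$, the quotient $\OS_r=\OX_r/\Out(F_r)$ is a rational model for $B\Out(F_r)$, and the tautological family of metrized graphs gives a bundle over $\OS_r$ whose fiber over a point is the graph that point represents. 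Its orbi-cell structure is exactly the one encoded by $\Gr_r$: cells are indexed by isomorphism classes of graphs $\Gamma$ and faces by sub-forest contractions, which fiberwise collapse edges.

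On this model the section is immediate. All vertices of a graph $\Gamma\in\Gr_r$ have valence $\geq 3$, so the weighted barycentre
\[
 b(\Gamma)=\frac{1}{2r-2}\sum_{v\in V\Gamma}\bigl(\mathit{val}(v)-2\bigr)\,v
\]
is a formal $\Q$-combination of the constant vertex-sections over the cell of $\Gamma$, whose weights are strictly positive and sum to $1$ because $\sum_{v}(\mathit{val}(v)-2)=2|E\Gamma|-2|V\Gamma|=2r-2$. This is exactly a $\Q$-linear section over each cell. Compatibility along faces is precisely the naturality proved in Lemma~\ref{l:splitting}: contracting an edge $e=(v,v')$ (letting its length tend to $0$) merges $v,v'$ into a vertex $w$ with $\mathit{val}(w)-2=(\mathit{val}(v)-2)+(\mathit{val}(v')-2)$, while $2r-2$ is unchanged, so $b(\Gamma)$ degenerates continuously to $b(\Gamma/e)$. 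As $b(\Gamma)$ is manifestly invariant under graph isomorphisms and the $\Out(F_r)$-action, the family $\{b(\Gamma)\}$ is exactly the map of coefficient systems $s\colon \K\Rightarrow C_\bullet$ of Lemma~\ref{l:splitting}, viewed as a global $\Q$-linear section of the universal graph bundle over $\OS_r$. By the criterion the universal class $e_r^\Q$ vanishes, and by naturality $e_r^\Q=0$ for every fibration with fiber $\simeq\vee_r S^1$.

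I expect the main obstacle to lie entirely in the first paragraph, namely the honest-fibers reduction: one must argue that, without affecting $e_r^\Q$, a general Serre fibration with fiber $\simeq\vee_r S^1$ may be replaced by a pullback of the tautological graph bundle over $\OS_r$, and that the cellular linear-section criterion applies to this orbi-cell model, the finite graph-automorphism stabilisers being rationally harmless. Once ``vertices'' make sense fiberwise, the barycentre formula and its boundary compatibility are forced by Lemma~\ref{l:splitting}, so the genuinely topological content is confined to setting up the universal graph bundle.
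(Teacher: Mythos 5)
Your proposal is correct and follows essentially the same route as the paper: reduce via Lemma~\ref{l:monoid} to the universal bundle over $B\Out(F_r)$, model it rationally by the tautological graph (orbi-)fibration $W\OS_r\to\OS_r$, and exhibit the rational section $\Gamma\mapsto\frac{1}{2r-2}\sum_{v}(\mathit{val}(v)-2)\,v$, whose compatibility with edge contractions is exactly the naturality from Lemma~\ref{l:splitting}. The ``honest-fibers reduction'' you flag as the main obstacle is handled in the paper by passing to the $\Out(F_r)$-equivariant fibration $W\OX_r\to\OX_r$ and, if one wishes to avoid orbi-language, its Borel construction as in Remark~\ref{r:orbi_not_nec}.
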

 
 \begin{proof}
 Serre fibrations $F\to E\to B$ with fiber $F\simeq \vee_r S^1$ are classified by maps $f\colon B\to B G(\vee_r S^1)$, where, for $r\geq 2$, the latter space is equivalent to $B\Out(F_r)$ by Lemma~\ref{l:monoid}. The obstruction class $e_r^\Q$  for $F\to E\to B$ is the pullback of the analogous class that we denote by ${\mathsf e}_r$ of the canonical $\vee_r S^1$ bundle over $B\Out(F_r)$.  Thus it is enough to show that ${\mathsf e}_r^\Q$ is  trivial.
 
 Rationally, instead of $B\Out F_r$ one can use the orbispace $\OS_r$ -- the moduli space of graphs. Whilst the role of the canonical $\vee_r S^1$ fibration is played by the Serre orbifibration $\pi S\colon W\OS_r\to \OS_r$, where $W\OS_r$ is the space of pairs $(\Gamma,x)$ with $\Gamma$ a metric graph in $\OS_r$ and $x$  a point in 
 $\Gamma/G_\Gamma$ (here $G_\Gamma$ is as before the group of symmetries of $\Gamma$).  It is a Serre orbifibration in the sense that it is obtained as the quotient by the $\Out(F_r)$ of the actual Serre fibration $\pi\colon  W\OX_r\to \OX_r$, where similarly $W\OX_r$ is the space of pairs -- a graph $\Gamma$ in $\OX_r$ (i.e. a graph together with a class of a homotopy equivalence $\vee_r S^1\to \Gamma$) and a point $x$ in $\Gamma$. The space $W\OX_r$ has an obvious $\Out(F_r)$ action, such that $\pi$ is $\Out(F_r)$-equivariant and one gets $W\OX_r/\Out(F_r) = W\OS_r$. 
 
 The orbifibration $\pi S$ admits a rational orbi-section assigning to an element $\Gamma\in \OS_r$ a linear combination of the vertices of $\Gamma$ (as in the proof of Lemma~\ref{l:splitting}):
 \[
 \sum_{v \in V\Gamma} \frac {|v|-2}{2r-2} v.
 \]
 (Again it is an orbi-section in the sense that it is obtained from an $\Out(F_r)$-equivariant section of $\pi$, given by the same formula.)
 
 \end{proof}
 
 \begin{rem}\label{r:orbi_not_nec}
 In fact we can avoid the \lq\lq{}orbi-langauge\rq\rq{} in the proof by considering instead of $\pi S$  the actual Serre fibration 
 \[
 (W\OX_r\times E\Out(F_r))/\Out(F_r) \to (\OX_r\times E\Out(F_r))/\Out(F_r),
 \]
 where $E\Out(F_r)$ is a weakly contractible space with a free $\Out(F_r)$ action such that its quotient by $\Out(F_r)$ is $B\Out(F_r)$. The fibers of the map above are again metric graphs, thus a rational section can be defined similarly.
 \end{rem}

 \subsection{Proof of Theorem~\ref{thm:serre_fibr}}\label{ss:proof_serre}
 Consider the Serre rational cohomology spectral sequence for a fibration $F\to E\to B$ with $F\simeq \vee_r S^1$, $r\geq 2$. Its second term $E_2^{p,q}=
 H^p(B,H^q(F,\Q))$ is concentrated on two horizontal lines $q=0$ and~1. The differential $d_2\colon H^p(B,H^1(F,\Q))\to H^{p+2}(B,H^0(F,\Q))$ is the composition
 \[
 H^p(B,H^1(F,\Q))\xrightarrow{e_r^\Q \cup (-)} H^{p+2}(B,H^1(F,\Q)\otimes H_1(F,\Q))\to H^{p+2}(B,\Q) = H^{p+2}(B,H^0(F,\Q)),
 \]
 where the first map is the cup-product with the analogue of the Euler class considered in the previous subsection, and the second map is induced by the obvious homomorphism of the coefficient systems. By Theorem~\ref{thm:euler}, $e_r^\Q=0$, and thus $d_2=0$. All other $d_i$, $i\geq 3$, are trivial by dimensional reason.

%
%
%
%
%


\bibliographystyle{plain}

\end{document}